\theoremstyle{plain}
\newtheorem{theorem}{Theorem}[section]
\newtheorem{lemma}[theorem]{Lemma}
\newtheorem{proposition}[theorem]{Proposition}
\theoremstyle{definition}
\theoremstyle{remark}
\newtheorem{remark}[theorem]{Remark}
\DeclareFontFamily{U}{mathx}{}
\DeclareFontShape{U}{mathx}{m}{n}{<-> mathx10}{}
\DeclareSymbolFont{mathx}{U}{mathx}{m}{n}
\DeclareMathAccent{\widehat}{0}{mathx}{"70}
\DeclareMathAccent{\widecheck}{0}{mathx}{"71}
\begin{document}
	\title[Solutions to Fifth-Order KP II Scatter]{Solutions to the Fifth-Order KP II Equation Scatter}
	\author{Peter A. Perry, Camille Schuetz}
	\address{ Department of Mathematics, University of Kentucky, Lexington, Kentucky 40506--0027, U.S.A.}
	\email{pperr0@uky.edu}
	\address{ Department of Mathematics, University of Wisconsin, Platteville, 
		Platteville, Wisconsin, 53818, U.S.A.}
		
	\email{feltonc@uwplatt.edu}

\begin{abstract}
	The fifth-order KP II equation describes dispersive long waves in two space dimensions. In this paper we show that solutions with small initial data scatter to solutions of the associated linear fifth-order equation. In particular, we establish the existence of nonlinear wave operators mapping the initial data to scattering asymptotes, and show that the nonlinear wave operators have inverses in a neighborhood of the origin. Our paper uses techniques developed for the third-order KP II equation by Hadac, Herr, and Koch. 
\end{abstract}

\medskip
	
\maketitle

	
\newcommand{\loc}{\mathrm{loc}}
\newcommand{\rc}{\mathrm{rc}}

\newcommand{\diff}{\partial}
\newcommand{\eps}{\varepsilon}
\newcommand{\dotarg}{\, \cdot \,}

\newcommand{\C}{\mathbb{C}}
\newcommand{\N}{\mathbb{N}}
\newcommand{\R}{\mathbb{R}}
\newcommand{\Z}{\mathbb{Z}}

\newcommand{\bfP}{\mathbf{P}}

\newcommand{\frakt}{\mathfrak{t}}

\newcommand{\calF}{\mathcal{F}}
\newcommand{\calS}{\mathcal{S}}
\newcommand{\calZ}{\mathcal{Z}}

\newcommand{\dotH}{\dot{H}}

\newcommand{\norm}[2][ ]{\left\Vert #2 \right\Vert_{#1}}
\newcommand{\bigO}[2][ ]{\mathcal{O}_{#1} \left( #2 \right)}


%
%
%
%

%
%




\newcommand{\sgn}{\mathrm{sgn}}
\newcommand{\ran}{\mathrm{Ran}}
\newcommand{\esssup}{\mathrm{ess} \, \mathrm{sup}}
\newcommand{\supp}{\mathrm{supp}}


\section{Introduction}
	The fifth-order Kadomtsev-Petviashvilli II equation is the nonlinear dispersive equation
	\begin{equation}
	\left\{
	\label{KPII-5}
		\begin{aligned}
			\diff_t u + \alpha \diff_x^3 u + \beta \diff_x^5 u + u \diff_x u + \diff_x^{-1} \diff_y^2 u &= 0,\\
			u(0,x,y) &= u_0(x,y)	.
		\end{aligned}
	\right. 
	\end{equation}
	It is an analogue in two space dimensions of the Kawahara equation \cite{Kawahara72}
$$
\diff_t w + w\diff_x w + \alpha \diff^3_x w+ \diff_x^5 w = 0,
$$
much as the third-order KP II equation (i.e., \eqref{KPII-5} with $\beta=0$ and $\alpha=1$) is a two-dimensional analogue of the KdV equation
$$
\diff_t w +  w \diff_x w + \diff^3_x w = 0.
$$
Both equations occur in the modeling of dispersive long waves in two space dimensions, where the $\diff_y^2 u$ term models weak dispersive effects in the $y$-direction.
In \eqref{KPII-5},  $\beta<0$ and $\alpha >0$;  henceforth we take $\beta=-1$. For further discussion of the fifth-order KP II equation,  see Klein and Saut's monograph \cite[section 5.3.12]{KS2021} and the introductions to \cite{ST99,ST00}.

In this paper we will prove that solutions of \eqref{KPII-5} with small initial data scatter as $t \to \pm \infty$ to solutions of the linear problem 
\begin{equation}
\label{KPII-5-linear}
\left\{
\begin{aligned}
v_t + \alpha \diff_x^3 v + \beta\diff_x^5 v + \diff_x^{-1} \diff_y^2 v
&=0,\\
v(0,x,y)&=u_\pm(x,y).
\end{aligned}
\right.
\end{equation}
with initial data $u_\pm$.  We will also prove continuity and analyticity of the wave operators
$$W_{\pm}: u_0 \to u_{\pm}$$ and their inverses
$$V_{\pm}: u_\pm \to u_0$$
in suitable function spaces. A detailed exposition of our results may be found in the doctoral thesis of the second author \cite{CS2023}.

Our work is inspired by the work of Hadac, Herr, and Koch \cite{HHK09,HHK10} on the third-order KP II equation 
\begin{equation}
	\label{KPII-3}
		\diff_t u + \diff_x^3 u + u \diff_x u + \diff_x^{-1} \diff_y^2 u = 0 
\end{equation}
and, as we will explain, we use a number of ideas and techniques from their paper in our analysis. Both the third- and fifth-order KP II equation are nonlinear dispersive equations of the form
	$$ \diff_t u + ih(D)u = NL(u) $$
	where $h$ is the dispersion relation, i.e.,
    \begin{equation}
        \label{KPII-5-dispersion}
        h(\xi,\eta) = \alpha \xi^3 + \beta \xi^5 -\xi^{-1} \eta^2 
    \end{equation}
	(with $\alpha=1$, $\beta=0$ for \eqref{KPII-3}), and the nonlinearity $NL(u)$ is quadratic. 
    For both equations, the symbol $h$ satisfies a non-resonance condition: letting $\zeta=(\xi,\eta)$, the equation 
    \begin{equation}
        \label{non-resonance-condition}
            h(\zeta-\zeta')+ h(\zeta') = h(\zeta)
    \end{equation}
	has no nonzero solutions. For the third-order KP II equation, and for the nonlinear Klein-Gordon and quadratic Schr\"{o}dinger equations considered in the thesis of Schottdorf \cite{Schottdorf13}, the non-resonance condition leads to key estimates on the nonlinearity which lead to global existence results for small data. In the thesis, Schottdorf remarks that, based on his treatment of non-resonant and resonant dispersive equations, one should expect to obtain global existence and scattering for second-order dispersive PDE when the non-resonance condition holds (see the discussion in \S 7.4 of Schottdorf's
    thesis; see also Germain's expository paper \cite{Germain2010} on space-time resonances, which discusses resonances in a broader context and motivates the condition \eqref{non-resonance-condition}). It is therefore natural to expect that the methods used by Hadac, Herr, and Koch in \cite{HHK09} and by Schottdorf in his thesis should also be effective for the fifth-order KP II equation. It is also natural to expect that the spaces $U^p$ and $V^p$ used in their work should provide a useful framework for the analysis of \eqref{KPII-5}; we define and discuss these spaces in what follows.

Hadac, Herr, and Koch obtain global existence and scattering for the third-order KP II equation for small initial data in the homogeneous Sobolev space $\dot{H}^{\frac12,0}(\R^2)$, the space of tempered distributions with
$$ \norm[\dot{H}^{-\frac12,0}(\R^2)]{u}=
	\left( \int_{\R^2} |\xi|^{-1} |\widehat{u}(\xi,\eta)|^2 \, d\xi \, d\eta\right)^\frac12
$$ 
finite. This space is the borderline of the sub-critical regime for the third-order KP II equation: the solution space of \eqref{KPII-3} is invariant under the scaling
	$$ u(t,x,y) \to \lambda^2 u(\lambda^3 t, \lambda x, \lambda^2 y) $$
	and the space $\dot{H}^{-\frac12,0}(\R^2)$ is invariant under this symmetry.

 We will also prove global existence and scattering for equation \eqref{KPII-5} for small initial data in $\dot{H}^{-\frac12,0}(\R^2)$: although this space is not critical for the fifth-order KP II equation,\footnote{For $\alpha=0$, equation \eqref{KPII-5} is critical for $H^{s_1,s_2}(\R^2)$ where $s_1+3s_2=-2$; see the discussion in the introduction to Li and Shi \cite{LS13}.} it is effectively scale-invariant for low frequencies, a fact which plays an important role in the estimates.

We will regard \eqref{KPII-5} as equivalent to the integral equation
\begin{equation}
\label{KPII-5-int}
u(t) 	=	S(t)u_0  - I(u,u)
\end{equation}
in a suitable function space,  where $S(t)$ is the solution operator
for the linear problem \eqref{KPII-5-linear} and 
\begin{equation}
\label{KPII-5-IT}
I(u,v)(t) = \frac12 \int_0^t S(t-s) (uv)_x (s) \,  ds
\end{equation}
is a bilinear map. As in \cite{HHK09} we look for solutions in $C(\R,\dot{H}^{-\frac12,0}(\R^2))$ 

We will study the integral equation \eqref{KPII-5-IT} in certain function spaces $\dot{Z}^{-\frac12}$ and $\dot{Y}^{-\frac12}$ contained in $C(\R,\dot{H}^{-\frac12,0})$ and analogous to those used in \cite{HHK09} (see section \ref{subsec:YZ} for a discussion of these spaces). As we will explain, finding a solution in $\dot{Z}^{-\frac12}$ immediately guarantees the existence of scattering asymptotes. The spaces $\dot{Z}^{-\frac12}$ and $\dot{Y}^{-\frac12}$ are built from spaces $U^p$ and $V^p$ of functions $u:\R \to L^2(\R)$ with finite $p$-variation: the spaces $V^p$ were introduced for functions on the real line by Wiener in \cite{Wiener24} 


while the dispersive versions of the spaces $U^p$ and $V^p$  were introduced by Tataru in \cite{T-unpublished}.
These spaces were further developed by 
by Koch and Tataru in \cite{KT05,KT07,KT2018}. 
Functions in a related space $V^p_0$ have asymptotic limits at infinity; see Proposition \ref{prop:V0} and the accompanying discussion. As in \cite{HHK09}, this fact will imply that $S(-t)u(t)$ has an asymptotic limit as $t \to \pm \infty$ when $u \in \dot{Y}^{-\frac12}$ (see Lemma \ref{lemma.Y.scat}), which establishes the existence of scattering asymptotes.

For $\delta > 0$, let
\begin{align}
	\label{dotB}
	\dot{B}_\delta&=	\left\{ 
					u_0 \in \dot{H}^{-\frac12,0}(\R^2): 
					\norm[\dot{H}^{-\frac12,0}(\R^2)]{u_0} < \delta
				\right\}
\end{align}
It is important to note that $\dot{Z}^{-\frac12} \subset C(\R,\dot{H}^{-\frac12,0}(\R^2))$. 
We will prove:

\begin{theorem}
	\label{thm:main} 
	There exist $\delta>0$ so that for any 
	$u_0 \in \dot{B}_\delta$, the KPII equation \eqref{KPII-5} has a unique solution $u \in \dot{Z}^{-\frac12}$ depending continuously on the initial data $u_0$. Moreover, the limits 
		$$ u_+ = \lim_{t \to \infty} S(-t) u(t), \quad
		   u_- = \lim_{t \to -\infty} S(-t) u(t)
		$$
	exist. If $u_0 \in \dot{B}_\delta \cap L^2$, then $u(t) \in L^2(\R^2)$ for all $t$ and the $L^2$ norm is conserved. The maps
	$$ W_\pm : \dot{B}_\delta \ni u_0 \to u_\pm \in \dot{H}^{-\frac12,0}(\R^2)$$
	are analytic with $\norm[L^2]{u_\pm} = \norm[L^2]{u_0}$ if $u_0 \in \dot{B}_\delta \cap L^2(\R^2)$. 
    Finally, there is a ball $B_\eta$ in $\dot{H}^{-\frac12,0}(\R^2)$ so that  
	$$ V_\pm: B_\eta  \ni u_\pm \to u_0 \in \dot{B}_\delta $$
	are analytic with $\norm[L^2]{u_0} = \norm[L^2]{u_\pm}$ if $u_\pm \in \dot{H}^{-\frac12,0}(\R^2) \cap L^2(\R^2)$.
	
\end{theorem}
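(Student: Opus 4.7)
My plan is to follow the Hadac--Herr--Koch template of \cite{HHK09} and solve the integral equation \eqref{KPII-5-int} by Banach fixed point in a small ball of $\dot{Z}^{-\frac12}$. Everything hinges on two estimates that I would establish first: a linear bound
\[
	\|S(\cdot) u_0\|_{\dot{Z}^{-\frac12}} \lesssim \|u_0\|_{\dot{H}^{-\frac12,0}},
\]
and a bilinear bound
\[
	\|I(u,v)\|_{\dot{Z}^{-\frac12}} \lesssim \|u\|_{\dot{Z}^{-\frac12}} \|v\|_{\dot{Z}^{-\frac12}}.
\]
Granting these, the map $\Phi(u) = S(\cdot) u_0 - I(u,u)$ is a strict contraction on a ball of radius $R = O(\delta)$ in $\dot{Z}^{-\frac12}$ once $\delta$ is small enough, yielding existence, uniqueness, and Lipschitz dependence of the solution on $u_0$. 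Since $\Phi$ is a quadratic polynomial in $u$, the analytic implicit function theorem in Banach spaces upgrades Lipschitz to analytic dependence of the data-to-solution map on $\dot{B}_\delta$, and hence of $W_\pm$.

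For scattering I would actually prove the sharper bilinear bound $\|I(u,v)\|_{\dot{Y}^{-\frac12}} \lesssim \|u\|_{\dot{Z}^{-\frac12}} \|v\|_{\dot{Z}^{-\frac12}}$ and combine it with the embedding $\dot{Y}^{-\frac12} \hookrightarrow \dot{Z}^{-\frac12}$. Lemma \ref{lemma.Y.scat} then produces the limits $\lim_{t \to \pm \infty} S(-t) I(u,u)(t)$ in $\dot{H}^{-\frac12,0}$, and because $S(-t) S(t) u_0 = u_0$ identically the asymptotes $u_\pm$ exist. To construct the inverses $V_\pm$ I would run the same contraction scheme from $t = \pm \infty$ rather than from $t = 0$: for $u_\pm$ in a small $\dot{H}^{-\frac12,0}$ neighborhood of the origin, solve
\[
	u(t) = S(t) u_\pm - \tfrac12 \int_{\pm\infty}^t S(t-s)(uu)_x(s) \, ds
\]
in $\dot{Z}^{-\frac12}$ by contraction and read off $u_0 = u(0)$. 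The bilinear estimate applies to this time-reversed Duhamel integral without essential modification, so $V_\pm$ is well-defined on $S_\pm$ and analytic for the same reason as $W_\pm$.

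For the $L^2$ conservation law I would first derive $\tfrac{d}{dt} \|u(t)\|_{L^2}^2 = 0$ for Schwartz solutions by the usual energy computation, exploiting the divergence form of the nonlinearity. For general $u_0 \in \dot{B}_\delta \cap L^2$, approximate by Schwartz data in both $\dot{H}^{-\frac12,0}$ and $L^2$, use the continuous dependence in $\dot{Z}^{-\frac12} \hookrightarrow C(\R, \dot{H}^{-\frac12,0})$ to pass the approximating solutions to the limit pointwise in $t$, and use the uniform $L^2$ bound together with weak-$\ast$ compactness to identify the limit as an $L^2$ function of the same norm. Then $\|u_\pm\|_{L^2} = \|S(-t)u(t)\|_{L^2} = \|u(t)\|_{L^2} = \|u_0\|_{L^2}$ by unitarity of $S(t)$ on $L^2(\R^2)$.

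The main technical obstacle is the bilinear estimate. Following \cite{HHK09}, I would dualize to a trilinear form on the space-time Fourier side and decompose dyadically in the sizes of $|\xi_j|$ and in the level sets of the resonance function attached to $\alpha \xi^3 - \xi^5 + \eta^2/\xi$, then handle each regime by $U^p$/$V^p$ duality together with Strichartz-type bounds for the propagator $S(t)$. The greater smoothing supplied by the $\diff_x^5$ term (compared with the $\diff_x^3$ symbol of KP II) is precisely what should allow the borderline KP II analysis to be simplified in the resonant zones, consistent with the authors' remark that $\dot{H}^{-\frac12,0}$ is comfortably subcritical for \eqref{KPII-5}.
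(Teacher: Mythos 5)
Your high-level skeleton matches the paper's: contraction mapping in a small ball of a $\dot{Z}^{-\frac12}$-type space, bilinear estimate via Littlewood--Paley decomposition, modulation splitting against the resonance function, and $U^p/V^p$ duality and Strichartz bounds, scattering from the $V^p$ asymptotic-limit property, and analyticity via the analytic implicit function theorem. A few concrete points, one of them substantive.

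First, the ``linear bound'' $\norm[\dot{Z}^{-\frac12}]{S(\cdot)u_0}\lesssim\norm[\dot{H}^{-\frac12,0}]{u_0}$ on all of $\R$ is false as written: $S(-t)S(t)u_0\equiv u_0$ is a constant function of $t$, and nonzero constants do not belong to $U^2$ because $U^2$ atoms vanish on $(-\infty,t_1)$. The paper handles this by working in the restriction spaces $\dot{Z}^{-\frac12}[0,\infty)$ and $\dot{Z}^{-\frac12}(-\infty,0]$, observing that $\alpha_0(t)S(t)u_0\in\dot{Z}^{-\frac12}$; you would need the same device, including for the contraction you propose to run from $t=\pm\infty$.

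Second, the embedding runs the other way: since $U^2\hookrightarrow V^2_{-,\rc}$, one has $\dot{Z}^{-\frac12}\hookrightarrow\dot{Y}^{-\frac12}$, not the reverse, and correspondingly the sharp bilinear estimate (Proposition~\ref{prop:IT.est}) is $\norm[\dot{Z}^{-\frac12}]{I_T(u,v)}\lesssim\norm[\dot{Y}^{-\frac12}]{u}\,\norm[\dot{Y}^{-\frac12}]{v}$ --- strong norm on the output, weak norms on the inputs --- which is the opposite of what you write. This reads like a consistent relabeling rather than a conceptual error, but it should be straightened out, because it is precisely the fact that $I_\infty(u,u)$ lands in a $V^2_S$-based space that produces the scattering asymptotes.

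Third, and this is a genuine gap: your $L^2$ conservation argument delivers only an inequality. Weak-$\ast$ lower semicontinuity gives $\norm[L^2]{u(t)}\leq\liminf\norm[L^2]{u_n(t)}=\norm[L^2]{u(0)}$, not equality. To close this you need \emph{strong} $L^2$ convergence of the approximating solutions, which in turn requires a persistence-of-regularity estimate. The paper proves exactly this in Proposition~\ref{prop:IT.L2}, namely $\norm[\dot{Z}^0]{I_T(u,v)}\lesssim\norm[\dot{Y}^{-\frac12}]{u}\,\norm[\dot{Y}^{-\frac12}]{v}$, which places the solution in $\dot{Z}^0\subset C(\R,L^2)$ and yields the $L^2$-Lipschitz dependence \eqref{u.L2.est1} needed to pass the conservation law to the limit in norm. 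The paper flags this estimate as precisely where the extra smoothing of the fifth-order symbol is used (via the second resonance identity \eqref{KPII5.resonance.2} and the enlarged modulation threshold $M\sim N_1N_2^{3/2}N_3^{3/2}$), so omitting it is a real missing ingredient and not bookkeeping. The same estimate is also what lets you conclude $\norm[L^2]{u_\pm}=\norm[L^2]{u_0}$: one needs $S(-t)u(t)\to u_\pm$ in $L^2$, not merely in $\dot{H}^{-\frac12,0}$, and this comes from $u\in\dot{Z}^0$ together with the argument of Lemma~\ref{lemma.Y.scat} run at the $\dot{Z}^0$ level.

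Finally, a route difference worth noting: the paper obtains analyticity of $V_\pm$ from the analytic inverse function theorem applied to $W_\pm$ (checking $W_\pm(0)=0$ and $DW_\pm(0)=I$), whereas you propose solving the time-reversed Duhamel equation from $t=\pm\infty$. Your approach is a legitimate alternative, closer to the original HHK treatment, and gives injectivity of $W_\pm$ directly; but it still requires boundedness of the Duhamel integral from $\pm\infty$, which the paper develops separately in Proposition~\ref{prop:Iinfty} (via Lemmas~\ref{lemma:free.shrink} and~\ref{lemma:BS-1-bis}), and you should not take that for granted.
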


To our knowledge, Theorem \ref{thm:main} is the first proof of scattering for \eqref{KPII-5}. On the other hand, the well-posedness result obtained here is far from optimal.

Indeed, there is a substantial literature on well-posedness for \eqref{KPII-5}.    Saut and Tzvetkov \cite{ST99,ST00} proved global well-posedness for 
\eqref{KPII-5} having initial data $u_0 \in L^2(\R^2)$ with $|\xi|^{-1} \widehat{u_0}(\xi,\eta) \in \calS'(\R^2)$.  Hadac \cite{Hadac08} studied the dispersion-generalized KP II equation
$$ \left(u_t - |D_x|^\gamma u_x + (|u|^2)_x\right) + u_{yy} = 0$$
for $\frac43 \leq \gamma \leq  6$  (if $\gamma=4$,  his equation
coincides with \eqref{KPII-5} for $\alpha=0$, $\beta<0$) and proved
global well posedness for initial data in the space
$$H^{s_1,s_2}(\R^2) = 
	\left\{ 
				u_0 \in \calS'(\R^2): 
					\int 
						\langle \xi \rangle^{2s_1} 
						\langle \eta \rangle^{2s_2} 
						|\widehat{u_0}(\xi,\eta)|^2  
					\, d\xi \,  d\eta 
				< \infty 
	\right\}.
$$
if $s_1 > -\frac12$ and $s_2 \geq 0$.
Isaza,  L\'{o}pez, and Mej\'{i}a \cite{ILM06} proved that solutions of the 
fifth-order KP II equation \eqref{KPII-5} (with $\alpha=0$) are locally well-posed in anisotropic Sobolev spaces $H^{s_1,s_2}(\R^2)$ with $s_1 > -\frac{5}{4}$ and $s_2 \geq 0$,  and globally well-posed in $H^{s,0}$ if $s>-\frac47$. 
More recently,  Li and Shi \cite{LS13} proved local well-posedness
for \eqref{KPII-5} with $\alpha=0$, $\beta=-1$ and initial data in the
space $H^{s_1,s_2}(\R^2)$ with $s_1 \geq -\frac54$ and $s_2 \geq 0$.  
For this reason, one might expect scattering results for \eqref{KPII-5} for initial data in rougher spaces than $\dot{H}^{-\frac12,0}(\R^2)$, but we have not considered this question here.

ut we have not considered this question here.

\section{Function Spaces and Fourier Decompositions}

To analyze the integral equation \eqref{KPII-5-int}, we introduce spaces $\dot{Y}^s$ and $\dot{Z}^s$ similar to those used in \cite{HHK09}. To define them, we first recall the spaces $V^p$ and $U^p$ of certain functions $u: \R \to L^2(\R^2)$ with 
	$$\norm[L^\infty]{u} \coloneqq \sup_t \norm[L^2]{u(t)}$$ 
	finite. The $V^p$ spaces were introduced by Wiener \cite{Wiener24} for functions on the line;
    
    the dispersive versions of $U^p$ and $V^p$ were first introduced by Tataru in \cite{T-unpublished}
    (see \cite[Section 2]{HHK09}, \cite{Koch14}, \cite{KT05}, Koch's lectures in \cite{KTV14}, Appendix B of \cite{KT2018}, and the paper of Candy-Herr \cite{CH18} for details and further references). 

We then define the $\dot{Z}^s$ and $\dot{Y}^s$ spaces. 

\subsection{$U^p$ and $V^p$ Spaces}
\label{subsec:UV}
	
	Here we follow the conventions of \cite{HHK09} including the modifications discussed in \cite{HHK10}. Let $\bfP$ denote the collection of partitions 
$$
	\tau = \{ t_i \}_{i=1}^N:  N \in \N, \quad -\infty =t_0 < t_1 < \ldots < t_N =  \infty
$$
and let $\bfP_\infty$ denote the collection of partitions 
$$  \{ t_i \}_{i=1}^N: \quad N \in \N, \quad-\infty < t_0 < \ldots < t_{N-1} < t_N \leq \infty$$
For a fixed $p$ with $1 < p < \infty$, the space $V^p$ consists of functions $u:\R \to L^2(\R^2)$  such that the limits $\lim_{t \to \pm \infty} u(t)$ exist and so that the norm
$$ \norm[V^p]{u} = \sup_{\tau \in \bfP} \left( \sum_{i=1}^{N} \norm[L^2]{v(t_{i})-v(t_{i-1})}^p\right)^{\frac1p}$$
is finite, where we make the notational conventions that $v(\infty)=0$ and $v(-\infty)=\lim_{t \to -\infty}v(t)$. Note that the $V^p$ norm bounds $\norm[L^2]{v(-\infty)}$ by considering the two-point partition $\{ -\infty,\infty\}$ and $\sup_{t \in \R} \norm[L^2]{u(t)}$ by considering the three-point partition $\{-\infty,t_1,\infty\}$. The space $V^p$ is a Banach space.  Functions in $V^p$ have right-hand limits in $[-\infty,\infty)$ and left-hand limits in $(-\infty,\infty]$. 

We denote by $V^p_-$ the space of all functions $v: \R \to L^2$ with $v(-\infty)=0$, $\lim_{t \to \infty} v(t)$ exists, and $\norm[V^p]{v} < \infty$, and by $V^p_{{\rc}}$ (resp.\ $V^p_{-,{\rc}}$) the closed subspace of $V^p$ consisting of right-continuous functions (resp.\ the closed subspace of $V^p_-$ consisting of right-continuous functions).

The following proposition (see \cite[Proposition 2.4(i)]{HHK09}) will be very important in our analysis of scattering.

\begin{proposition} \cite{HHK09}
	\label{prop:V0}
	Fix $p$ with $1 < p < \infty$.
	Suppose that $v: \R \to L^2(\R^2)$ be such that 
	$$ \norm[V^p_0]{v} \coloneqq \sup_{\tau \in \mathbf{P_\infty}} \left( \sum_{i=1}^{N} \norm[L^2]{v(t_{i}) - v(t_{i-1})}^p  \right)^\frac1p
	$$
	is finite. Then $\lim_{t \downarrow t_0} v(t)$ exists for all $t_0 \in [-\infty,\infty)$ and $\lim_{t \uparrow t_0} v(t)$ exists for all $t_0 \in (-\infty,\infty]$. Moreover,
	$\norm[V^p]{v} = \norm[V^p_0]{v}$.
	\end{proposition}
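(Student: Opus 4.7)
The plan is to establish the existence of all one-sided limits first, then derive the norm equality as a consequence of having those limits available. The existence step will be a Cauchy-net argument by contradiction: failure of convergence at any point will let me exhibit arbitrarily long partitions in $\mathbf{P}_\infty$ whose $V^p_0$-sum exceeds any prescribed bound. Concretely, fix $t_0 \in [-\infty,\infty)$ and suppose $\lim_{t \downarrow t_0} v(t)$ does not exist in $L^2(\R^2)$; then there is $\eps > 0$ such that every right neighborhood of $t_0$ contains points $s,s'$ with $\norm[L^2]{v(s)-v(s')} > \eps$. I would inductively construct a strictly decreasing sequence $s_1 > s'_1 > s_2 > s'_2 > \cdots \downarrow t_0$ with $\norm[L^2]{v(s_n)-v(s'_n)} > \eps$ by invoking the non-Cauchy property at each step on the interval $(t_0, s'_{n-1})$. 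Reading the points in increasing order yields the partition $\tau_N = \{s'_N, s_N, s'_{N-1}, s_{N-1}, \ldots, s'_1, s_1\} \in \mathbf{P}_\infty$, whose $N$ distinguished pairs $(s'_k, s_k)$ each contribute more than $\eps^p$ to the $p$-th power sum, forcing $\norm[V^p_0]{v} > N^{1/p}\eps$ for every $N$, a contradiction. Left limits at points of $(-\infty,\infty]$ follow from a symmetric argument with increasing sequences.

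Once all one-sided limits are in hand, the norm equality will follow by comparing the two partition classes. For the bound $\norm[V^p_0]{v} \leq \norm[V^p]{v}$, any $\tau = \{t_0, \ldots, t_N\} \in \mathbf{P}_\infty$ extends to a $\bfP$-partition by prepending $-\infty$ and, when $t_N < \infty$, appending $\infty$; with the conventions $v(-\infty) = \lim_{t \to -\infty} v(t)$ and $v(\infty) = 0$, the two extra increments are non-negative, so the $V^p$-sum of the extension dominates the $V^p_0$-sum of $\tau$. For the reverse inequality I would approximate a $\bfP$-partition $\{-\infty, t_1, \ldots, t_{N-1}, \infty\}$ by the $\mathbf{P}_\infty$-partition $\{s, t_1, \ldots, t_{N-1}, \infty\}$ for $s < t_1$; sending $s \to -\infty$ and invoking continuity of the $L^2$ norm together with the just-established existence of $\lim_{t \to -\infty} v(t)$ shows that the $V^p_0$-sum recovers the $V^p$-sum of the original partition in the limit. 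Taking suprema then gives the equality.

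The main obstacle will be implementing the inductive extraction cleanly, particularly at the infinite endpoints, where one must confirm that the constructed sequences genuinely escape to $\pm\infty$ while preserving the strict interlacing $s_{n+1} < s'_n < s_n$ (or its reverse). Once that combinatorial arrangement is secured, the rest of the argument is essentially bookkeeping with the defining conventions.
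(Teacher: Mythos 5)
Your proof is correct, and it is the standard argument for this fact; the paper itself does not give a proof but cites HHK09, Proposition 2.4(i). The one remark worth making concerns your final worry about the constructed sequences ``genuinely escaping to $\pm\infty$'': that concern is unnecessary. Once you have produced $N$ disjoint pairs $(s'_k,s_k)$ with $\norm[L^2]{v(s_k)-v(s'_k)}\geq\eps$, the finite partition $\{s'_N,s_N,\ldots,s'_1,s_1\}\in\bfP_\infty$ already gives $\norm[V^p_0]{v}\geq N^{1/p}\eps$, and this contradicts the finiteness of $\norm[V^p_0]{v}$ whether the $s'_n$ accumulate at $t_0$, at some finite $t^*>t_0$, or drift to $-\infty$; no control over the accumulation point is required. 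The norm-equality half is likewise complete, provided one notes (as you implicitly do) that the convention $v(\infty)=0$ is also in force for $\bfP_\infty$-partitions whose right endpoint is $\infty$, so that the two suprema are taken over directly comparable quantities.
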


Next, we define the $U^p$ spaces. For a partition $\tau \in \bfP$, denote by $\chi_{[t_{i-1},t_{i})}$ the characteristic function of $[t_{i-1},t_i)$. The space $U^p$ is an atomic space with atoms
$$ a(t) = \sum_{i=1}^{N} \chi_{[t_{i-1},t_i)}(t) \psi_{i-1}, \quad \sum_{i=0}^{N-1} \norm[L^2]{\psi_i}^p \leq 1, \quad \psi_0 \equiv 0$$
and elements of the form 
$$ u = \sum_{i=1}^\infty \lambda_i a_i $$
where each $a_i$ is a $U^p$ atom and $\sum_{i=1}^\infty |\lambda_i|< \infty$. The norm on $U^p$ is
$$ 
	\norm[U^p]{u} = \inf 
						\left\{ 
							\sum |\lambda_i|: 
							u = \sum \lambda_i a_i 
						\right\}.
$$ 
The space $U^p $ is continuously embedded in $ V^p_{-,{\rc}}$ (see \cite[Proposition 2.4(iii)]{HHK09}).

To estimate $U^p$ norms we will use the following duality principle (see Hadac-Herr-Koch \cite{HHK09}, Theorem 2.8, Proposition 2.10, and Remark 2.11, and, for the form used here, Candy-Herr \cite{CH18}, Theorem 5.1).
\begin{equation}
	\label{Up.norm.dual}
	\norm[U^p]{u} = 
		\sup_{
			\substack{
						\diff_t \varphi \in C_0^\infty(\R^3)\\[2pt] 
						\norm[V^p]{\varphi} \leq 1
					}
			} 
			\left| 
				\int \langle \diff_t  \varphi ,u \rangle_{L^2} \, dt 
			\right|
\end{equation}

Next, we define spaces $U^p_S$ and $V^p_S$ adapted to the linear evolution $S(t)$ for \eqref{KPII-5-linear}. The space $U^p_S$ (resp.\ $V^p_S$) is the space of functions $u$ with $S(-t)u(t) \in U^p$ (resp.\ $S(-t)u(t) \in V^p$) with norms
$$ 
	\norm[U^p_S]{u} = \norm[U^p]{S(-t) u(t)}, \quad \norm[V^p_S]{u} = \norm[V^p]{S(-t) u(t)}.
$$ 
We make an analogous definition for $V^p_{-,{\rc},S}$. 
For exact solutions of the linear problem \eqref{KPII-5-linear}, the norms $\norm[U^2_S]{\cdot}$ and $\norm[V^2_S]{\cdot}$ recover the $L^2$ norm of the initial data. Note that, if $u \in V^p_S$, then the limits $\lim_{t \to \pm \infty} S(-t)u(t)$ exist in $L^2(\R^2)$ since $V^p$ functions have limits in $L^2$ as $t \to \pm \infty$.  We also have the following analogue of \eqref{Up.norm.dual}:
\begin{equation}
	\label{Ups.norm.dual}
	\norm[U^p_S]{u} = 
		\sup_{
			\substack{
						\diff_t \varphi \in C_0^\infty(\R^3)\\[2pt] 
						\norm[V^p]{\varphi} \leq 1
					}
			} 
			\left| 
				\int \langle \diff_t  \varphi ,S(-t)u \rangle_{L^2} \, dt 
			\right|.
\end{equation}

\subsection{{The Transfer Principle for $U^p_S$}}

\label{subsec:transfer}

The $U^p_S$ spaces obey a ``transfer principle''
which comes from the the fact that $U^p_S$ atoms are superpositions of 
the form $$\sum_{i=1}^N \chi_{[t_{k-1},t_k)}(t) S(t) \psi_{k-1}$$ where
$\sum_{k=1}^N \chi_{[t_{k-1},t_k)}(t) \psi_{k-1}$ is a $U^p$ atom.

\begin{proposition}[{\cite[Proposition 2.19(i)]{HHK09}}]
\label{prop:transfer}
Suppose that $T_0$ is an $n$-linear mapping 
$$ T_0: L^2(\R) \times \ldots \times L^2(\R^2) \to L^1_{\mathrm{loc}}(\R^2;\C). $$
Fix $p$ and $q$ with $1 \leq p,q \leq \infty$ and suppose that
			$$ 
			\norm[L^p_t(\R,L^q(\R^2))]{T_0(S(\cdot) \varphi_1, \ldots, S(\cdot) \varphi_n)} 
			\lesssim \norm[L^2(\R^2)]{\varphi_1} \ldots \norm[L^2(\R^2)]{\varphi_n}.
			$$
			There exists a map 
			$T:U^p_S \times \ldots \times U^p_S \to L^p(\R_t,L^q(\R^2))$
			extending $T_0$ with
			\begin{equation}
			\label{transfer.norm.1}
			\norm[L^p_t(\R,L^q(\R))]{T(u_1, \ldots, u_n)}
				\lesssim	\norm[U^p_S]{u_1} \times \ldots \times \norm[U^p_S]{u_n}.
			\end{equation}
			such that
			$$T(u_1,\ldots, u_n)(x,y) = T_0(u_1(t),\ldots u_n(t))(x,y) $$
			a.e.
\end{proposition}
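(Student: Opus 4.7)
The plan is to establish the estimate first for $n$-tuples of $U^p_S$ atoms, exploiting the atomic structure to reduce to the hypothesis on $S(\cdot)$-orbits, and then to extend to general elements of $U^p_S$ by multilinearity in the atomic decomposition. So I would fix atoms
$$u_j(t) = \sum_{k_j=1}^{N_j} \chi_{I^{(j)}_{k_j}}(t)\, S(t) \psi^{(j)}_{k_j - 1}$$
for $j = 1, \dots, n$, with $I^{(j)}_{k_j} = [t^{(j)}_{k_j - 1}, t^{(j)}_{k_j})$ and $\sum_{k_j} \|\psi^{(j)}_{k_j-1}\|_{L^2}^p \leq 1$, and use $n$-linearity of $T_0$ at each fixed time to expand
\begin{equation*}
T_0(u_1(t), \dots, u_n(t)) = \sum_{k_1, \dots, k_n} \chi_{I^{(1)}_{k_1} \cap \dots \cap I^{(n)}_{k_n}}(t) \, T_0\bigl(S(t)\psi^{(1)}_{k_1 - 1}, \dots, S(t)\psi^{(n)}_{k_n - 1}\bigr).
\end{equation*}

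The key observation is that the intersections $\bigcap_j I^{(j)}_{k_j}$ are pairwise disjoint as the tuple $(k_1, \dots, k_n)$ varies, so the $L^p_t L^q$-norm-to-the-$p$th splits additively. Applying the hypothesis to each fixed $n$-tuple of data will give
\begin{align*}
\|T_0(u_1, \dots, u_n)\|_{L^p_t L^q}^p
&= \sum_{k_1, \dots, k_n} \bigl\| \chi_{\bigcap_j I^{(j)}_{k_j}} T_0\bigl(S(\cdot)\psi^{(1)}_{k_1-1}, \dots, S(\cdot)\psi^{(n)}_{k_n-1}\bigr)\bigr\|_{L^p_t L^q}^p \\
&\leq \sum_{k_1, \dots, k_n} \bigl\| T_0\bigl(S(\cdot)\psi^{(1)}_{k_1-1}, \dots, S(\cdot)\psi^{(n)}_{k_n-1}\bigr)\bigr\|_{L^p_t L^q}^p \\
&\lesssim \sum_{k_1, \dots, k_n} \prod_{j=1}^n \|\psi^{(j)}_{k_j - 1}\|_{L^2}^p = \prod_{j=1}^n \left(\sum_{k_j=1}^{N_j} \|\psi^{(j)}_{k_j-1}\|_{L^2}^p\right) \leq 1,
\end{align*}
where the factorization in the final line uses the elementary identity $\sum_{\vec k} \prod_j a^{(j)}_{k_j} = \prod_j \sum_{k_j} a^{(j)}_{k_j}$ for nonnegative summands.

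To extend from atoms to arbitrary elements, I would take atomic decompositions $u_j = \sum_{i_j} \lambda^{(j)}_{i_j} a^{(j)}_{i_j}$ with $\sum_{i_j} |\lambda^{(j)}_{i_j}| < \infty$ for each $j$, and define
$$T(u_1, \dots, u_n) = \sum_{i_1, \dots, i_n} \lambda^{(1)}_{i_1} \cdots \lambda^{(n)}_{i_n}\, T_0\bigl(a^{(1)}_{i_1}, \dots, a^{(n)}_{i_n}\bigr),$$
which converges absolutely in $L^p_t L^q$ by the atom estimate, with norm bounded by $C \prod_j \sum_{i_j} |\lambda^{(j)}_{i_j}|$. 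Passing to the infimum over atomic representations then delivers \eqref{transfer.norm.1}. Independence of $T(u_1, \dots, u_n)$ from the chosen atomic representations reduces by $n$-linearity to the case where one argument admits a representation of zero, which again follows from the atom estimate applied to truncated partial sums.

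The main obstacle I expect is the pointwise a.e.\ identification of $T$ with $T_0$: this requires commuting the absolutely convergent defining series with pointwise evaluation, which one handles by extracting a subsequence of partial sums converging a.e.\ in $L^1_{\loc}$ and invoking the assumed $L^1_{\loc}$ range of $T_0$ to identify the limit. The algebraic core of the proposition, however, is just the factorization identity in the atom estimate above, which leverages the disjointness of the rectangles $\bigcap_j I^{(j)}_{k_j}$; everything else is standard atomic-space bookkeeping of the type developed in \cite{HHK09}.
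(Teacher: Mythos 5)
The paper does not prove Proposition \ref{prop:transfer}; it cites it directly from \cite[Proposition 2.19(i)]{HHK09}. Your proof correctly reproduces the standard argument from that reference: reduce to atoms, observe that the intersections $\bigcap_j I^{(j)}_{k_j}$ partition the line so that the $L^p_t L^q$ integral splits over disjoint time slabs, apply the free-solution hypothesis on each slab, and then factor the resulting sum over tuples into a product of single sums, each at most $1$. The only small bookkeeping points you should make explicit are the $p=\infty$ case (replace the power-$p$ sum by a supremum over $t$, where at each time exactly one tuple is active, so the bound is immediate) and that the well-definedness/a.e.\ identification in your last paragraph quietly uses continuity of $T_0$ on its $L^2$ arguments, which holds in all applications (products and pointwise nonlinearities) but is not literally contained in the bare multilinearity hypothesis; this same implicit assumption is present in the original \cite{HHK09} formulation, so it is not a defect of your argument specifically.
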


A simple application of the transfer principle and the Strichartz estimate proven in Lemma \ref{lemma:Strichartz} of what follows gives the following estimates (compare \cite{HHK09}, Corollary 2.21).
\begin{align}
	\label{L4.Vp}
	\norm[L^4(\R^3)]{u} & \lesssim \norm[V^p_{-,S}]{u}, \quad 1 \leq p < 4.
\end{align}

\subsection{The Spaces $\dot{Y}^s$ and $\dot{Z}^s$}
\label{subsec:YZ}

For $s < 0$, the spaces $\dot{Y}^s$ and $\dot{Z}^s$ are modifications of $V^2_S$ and $U^2_S$ adapted to functions with low regularity.
To define these spaces we first recall the Littlewood-Paley projections (see for example \cite[Appendix A]{Tao06} for a discussion).  

Let $\varphi \in C_0^\infty(\R)$ with $\varphi(\xi)=1$ for $|\xi| \leq 1$ and $\varphi(\xi)=0$ for $|\xi| \geq 2$. Let $\psi(\xi) = \varphi(\xi)-\varphi(2\xi)$. A dyadic integer $N$ is an integer $N=2^k$ for some $k \in \Z$. We set $\psi_N(\xi) = \psi(\xi/N)$ and we denote by $P_N$ the Littlewood-Paley projection given by
	$$ \widehat{P_N f}(\xi) = \psi_N(\xi) \widehat{f}(\xi). $$ Unless otherwise noted, Fourier transformation is taken with respect to the $x$ variable to define $P_N$. 
	
The space $\dot{Z}^s$ is the closure of $U^2_S \cap C(\R,H^{1,1}(\R^2))$ in the norm
\begin{equation}
	\label{Zs.norm}
		\norm[\dot{Z}^s]{u} 
			=	\left( \sum_{N} N^{2s} \norm[U^2_S]{P_N u}^2 \right)^\frac12,
\end{equation}
where the sum is over all dyadic integers $N$. Similarly, $\dot{Y}^s$ is the closure of functions $v \in V^2_{-,{\rc},S} \cap C(\R,H^{1,1}(\R^2))$ with
\begin{equation}
	\label{Ys.norm}
		\norm[\dot{Y}^s]{u} 
			=	\left( \sum_{N} N^{2s} \norm[V^2_S]{P_N u}^2 \right)^\frac12	.
\end{equation}

By initially considering functions in $C(\R,H^{1,1}(\R^2))$ we will be able to analyze the bilinear map \eqref{KPII-5-IT} and extend it by density to a bounded bilinear map from $\dot{Z}^{-\frac12} \times \dot{Z}^{-\frac12}$ to $\dot{Z}^{-\frac12}$. 
Here we will take $s=-\frac12$ corresponding to the choice of $\dot{H}^{-\frac12,0}(\R^2)$ as the space of initial data. Owing to the continuous embedding $U^2_S \subset V^2_S$, we also have $\dot{Z}^{s} \subset \dot{Y}^{s}$ with continuous embedding. Since $\dot{Y}^{-\frac12}$ and $\dot{Z}^{-\frac12}$ are closures of a set of continuous functions in norms that dominate the $C(\R, \dot{H}^{-\frac12,0})$ norm, it follows that $\dot{Y}^{-\frac12}$ and $\dot{Z}^{-\frac12}$ functions lie in $C(\R, \dot{H}^{-\frac12,0})$ with 
\begin{align}
	\label{Y.cont}
	\norm[C(\R,\dot{H}^{-\frac12})]{u} &\lesssim \norm[\dot{Y}^{-\frac12}]{u}
\intertext{and}
	\label{Z.cont}
	\norm[C(\R,\dot{H}^{-\frac12})]{u} &\lesssim \norm[\dot{Z}^{-\frac12}]{u}	
\end{align}
respectively.

Our scattering result depends on the following fact.

\begin{lemma}
	\label{lemma.Y.scat}
	Suppose that $u \in \dot{Y}^{-\frac12}$. Then 
	$$ \varphi_\pm = \lim_{t \to \pm \infty} S(-t)u(t) $$
	exists in $\dot{H}^{-\frac12,0}(\R^2)$.
\end{lemma}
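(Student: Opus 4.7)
The plan is to use the Littlewood--Paley decomposition to reduce the convergence of $S(-t)u(t)$ in $\dotH^{-\frac12,0}(\R^2)$ to the convergence of each frequency block $S(-t)P_N u(t)$ in $L^2$, where Proposition \ref{prop:V0} applies. The hypothesis $u\in \dot{Y}^{-\frac12}$ gives $\sum_N N^{-1}\norm[V^2_S]{P_N u}^2<\infty$, and this summable bound will dominate the dyadic sum and let us exchange limit and sum by dominated convergence.

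\textbf{Step 1: existence of the frequency pieces.} Since $P_N$ is a Fourier multiplier in $x$, it commutes with $S(t)$, so $S(-t)P_N u(t) = P_N S(-t) u(t)$. Directly from the definition \eqref{Ys.norm} one has
\[
\norm[V^2_S]{P_N u} \leq N^{\frac12}\,\norm[\dot{Y}^{-\frac12}]{u},
\]
so each $P_N u$ lies in $V^2_S$. By Proposition \ref{prop:V0} (or directly from the definition of $V^2$), the $L^2$-limits
\[
\varphi_{\pm,N} \coloneqq \lim_{t\to\pm\infty} S(-t)\,P_N u(t)
\]
exist in $L^2(\R^2)$. The remarks following the definition of $V^p$ give $\norm[L^2]{\varphi_{\pm,N}} \leq \norm[V^2_S]{P_N u}$. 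Each $\varphi_{\pm,N}$ is frequency-localized to $|\xi|\sim N$ since this property is preserved under $L^2$ convergence.

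\textbf{Step 2: assembling the limit.} Define $\varphi_\pm \coloneqq \sum_N \varphi_{\pm,N}$. Because the dyadic shells have only finite overlap, the standard Littlewood--Paley equivalence
$\norm[\dotH^{-\frac12,0}]{f}^2 \sim \sum_N N^{-1}\norm[L^2]{P_N f}^2$
gives
\[
\norm[\dotH^{-\frac12,0}]{\varphi_\pm}^2 \sim \sum_N N^{-1}\norm[L^2]{\varphi_{\pm,N}}^2 \leq \sum_N N^{-1}\norm[V^2_S]{P_N u}^2 = \norm[\dot{Y}^{-\frac12}]{u}^2 < \infty,
\]
so $\varphi_\pm\in\dotH^{-\frac12,0}(\R^2)$, and one checks $P_N\varphi_\pm = \varphi_{\pm,N}$ using the partition-of-unity property of $\sum_N P_N$. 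To upgrade this to convergence of $S(-t)u(t)$, apply the same equivalence to the difference:
\[
\norm[\dotH^{-\frac12,0}]{S(-t)u(t)-\varphi_\pm}^2 \sim \sum_N N^{-1}\,\norm[L^2]{S(-t)P_N u(t) - \varphi_{\pm,N}}^2.
\]
Each summand tends to $0$ as $t\to\pm\infty$ by Step 1 and is uniformly bounded by $4 N^{-1}\norm[V^2_S]{P_N u}^2$, a summable sequence. Dominated convergence on the dyadic sum then yields $S(-t)u(t)\to\varphi_\pm$ in $\dotH^{-\frac12,0}$.

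\textbf{Expected obstacle.} The argument is essentially routine once the $\dot{Y}$--$V^p$ machinery is in hand, so I do not expect a serious technical hurdle. The one point requiring a moment of care is that $\dot{Y}^{-\frac12}$ is defined as a closure of a subspace of smoother functions, so a priori $P_N u$ is only defined on that subspace. However, the bound $\norm[V^2_S]{P_N u}\leq N^{\frac12}\norm[\dot{Y}^{-\frac12}]{u}$ exhibits $P_N$ as a bounded linear map into the complete space $V^2_S$, which extends automatically to all of $\dot{Y}^{-\frac12}$; after this bookkeeping the rest of the proof goes through unchanged.
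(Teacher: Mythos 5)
Your proof is correct and follows essentially the same route as the paper: decompose into Littlewood--Paley pieces, use the $V^2_S$ structure (Proposition \ref{prop:V0}) to get $L^2$ limits of each block, and assemble via the square-summable bound coming from the $\dot{Y}^{-\frac12}$ norm. The paper phrases the final convergence as a split into a uniformly small dyadic tail plus finitely many converging middle blocks, which is just your dominated-convergence argument unpacked; the two are the same argument.
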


\begin{proof}
	From the bound $\norm[L^2]{S(-t)f(t)} \lesssim \norm[V^2_S]{f(t)}$, uniform in $t$, we have
		$\norm[L^2]{P_N S(-t) u(t)} \lesssim \norm[V^2_S]{P_N u}$ and hence
		\begin{equation} 
		\label{St.ut.bd}
		\sum_N N^{-1} \norm[L^2]{P_N S(-t) u(t)}^2 \lesssim \norm[\dot{Y}^{-\frac12}]{u}^2 
		\end{equation}
		uniformly in $t$. Let $$u_{\pm,N} = \lim_{t \to \pm \infty} P_N S(-t) u(t)$$ where the limit is taken in $L^2(\R^2)$ and exists owing to the fact that $P_N u \in V^2_S$. From the bound \eqref{St.ut.bd} we may conclude that $u_\pm = \sum_N u_{\pm, N} \in \dot{H}^{-\frac12,0}(\R^2)$. 
		
		For any $\eps>0$ we can find a dyadic integer $L$ so that 
		$$ \left( \sum_{N < L^{-1}}+\sum_{N> L} \right) N^{-1} \norm[L^2]{P_N S(-t) u(t)}^2 < \frac{\eps}{4}$$
		uniformly in $t$, and hence, also 
		$$ \left( \sum_{N < L^{-1}}+\sum_{N> L} \right) N^{-1} \norm[L^2]{u_{\pm,N}}^2 \lesssim  \frac{\eps}{4}.$$ 
		On the other hand, for all dyadic integers $N$ with $L^{-1} \leq N \leq L$, we can find $T$ with $\pm T > 0$ and large so that 
		$$ \sum_{L^{-1} \leq N \leq L} N^{-1} \norm[L^2]{P_N S(-t)u(t) - u_{\pm,N}}^2 < \frac{\eps}{2}. $$
		It now follows that $S(-t)u(t) \to u_\pm$ in $\dot{H}^{-\frac12,0}(\R^2)$ as $t \to \pm \infty$. 
\end{proof}

\subsection{Modulation Decomposition}
\label{subsec:module}

It will also be useful to decompose functions with respect to their modulation. To define the decomposition, first denote by $Q_M$ the Littlewood-Paley multipliers in the $\tau$ variable (the Fourier variable corresponding to $t$) by
	$$ \widehat{Q_M f} =\psi_M(\tau) \widehat{f}(\tau) $$
	where $\widehat{\, \cdot \,}$ denotes the Fourier transform in $t$ and $M$ is a dyadic integer. We now set
	\begin{equation}
		\label{QSM}
			Q^S_M = S(t) Q_M S(-t) 
	\end{equation}
	so that 
	$$ \widehat{Q^S_M f}(\tau,\xi,\zeta) = \psi_M(\tau - p(\xi,\zeta)) \widehat{f}(\tau,\xi,\zeta)$$
	where now $\widehat{\, \cdot \,}$  denotes the Fourier transform in $(t,x,y)$ and $S(t)$ has symbol $e^{itp(\xi,\eta)}$ (see \eqref{S.symbol}).  For dyadic integers $M$, we set
	\begin{align}
		\label{QS.small}
		Q_{S< M}	&=	\sum_{N < M} Q^S_N\\
		\label{QS.large}
		Q_{S \geq M}&=	\sum_{N \geq M} Q^S_N
	\end{align}
	which project onto states with low or high modulation. We will use this decomposition to prove estimates on the bilinear form \eqref{KPII-5-IT}.
	
	We recall from \cite[Corollary 2.18]{HHK09} the following estimates:
	\begin{align}
		\label{Q.big1}
		\norm[L^2(\R^3)]{Q_{\geq M}^S v} & \lesssim M^{-\frac12} \norm[V^2_S]{v}\\
		\label{Q.Up}
		\norm[U^p_S]{Q_{<M}^S v} & \lesssim \norm[U^p_S]{v} &
		\norm[U^p_S]{Q_{\geq M}^S v} & \lesssim \norm[U^p_S]{v}\\
		\label{Q.Vp}
		\norm[V^p_S]{Q_{<M}^S v} & \lesssim \norm[V^p_S]{v} &
		\norm[V^p_S]{Q_{\geq M}^S v} & \lesssim \norm[V^p_S]{v}
	\end{align}
	These carry over immediately from the third-order linear evolution in \cite{HHK09} to the fifth-order evolution considered here since the proofs of the estimates only use the relation \eqref{QSM} together with the definition of $U^p_S$ and $V^p_S$ with respect to $S(t)$.

\section{Strichartz Estimates}

In this section we prove estimates on the linear evolution $S(t)$ 
which imply,  by the transfer principle for $U^p$ spaces (see \cite[Proposition 2.19]{HHK09}) and interpolation, (see \cite[Proposition 2.20]{HHK09}),  linear and bilinear estimates for functions in $U^p_S$ and $V^p_S$. 

For the linear problem \eqref{KPII-5-linear},  the solution operator $S(t)$ acts as a Fourier multiplier with symbol $\exp(itp(\xi,\eta))$ where
\begin{equation}
\label{S.symbol}
p(\xi,\eta) = -\beta \xi^5 +\alpha \xi^3 - \xi^{-1} \eta^2. 
\end{equation}

Saut and Tzvetkov \cite[Lemma 2]{ST99} (see also \cite[Theorem 4.2]{BS99} proved that for any $p,r$ with $\frac1p+\frac1r = \frac12$, the estimate
\begin{equation}
    \label{Strichartz:ST}
    \norm[L^p_t(\R, L^r(\R^2)]{|D|^{1/p} S(t) f} \lesssim \norm[L^2]{f}
\end{equation}
holds. This estimate gives good decay for high-frequency modes (where the $\xi^5$ in $p(\xi,\eta)$ dominates) but for lower frequency modes (where the $\xi^3$ term in $p(\xi,\eta)$ dominates) we can get a finer estimate. 

\begin{lemma}
\label{lemma:Strichartz}
Suppose that $\alpha>0$ and $2 \leq q,r < \infty$ with $q^{-1} + r^{-1} = \frac12$,  and let
$$ \delta(r) = 1- \frac{2}{r} = \frac{2}{q}. $$
For any $\varphi \in L^2$,  the estimates
\begin{align}
\label{S1.small}
\norm[L^q_t(\R,L^r(\R^2))]{S(t) P_{\lesssim 1} \varphi}
	&	\lesssim_{\,  q,r} \norm[L^2(\R^2)]{P_{\lesssim 1} \varphi}\\
\intertext{and,  for $N \gtrsim 1$,}
\label{S1.large}
\norm[L^q_t(\R,L^r(\R^2))]{S(t) P_{N} \varphi}
	&	\lesssim _{\, q,r} N^{-\delta(r)/2} \norm[L^2(\R^2)]{P_N \varphi}.
\end{align}
\end{lemma}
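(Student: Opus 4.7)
The natural plan is the standard $TT^*$ Strichartz argument, taking the pointwise-in-time dispersive bound proved in the preceding paragraph as the sole input. Set $T_N \varphi = S(t) P_N \varphi$, viewed as an operator $L^2(\R^2) \to L^q_t L^r_{x,y}$. Since $S(t)$ is $L^2$-unitary (its symbol $e^{itp(\xi,\eta)}$ is unimodular), we have $T_N T_N^* F(t) = \int_{\R} S(t-s) P_N^2 F(s) \, ds$, and $T_N$ is bounded with norm $C_N$ if and only if $T_N T_N^* : L^{q'}_t L^{r'}_{x,y} \to L^q_t L^r_{x,y}$ is bounded with norm $C_N^2$.

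For the integrand I would insert the dispersive estimate with the choice $p = r'$. Since $1 - 2/r' = 2/r - 1 = -\delta(r)$, the large-$N$ case yields
\begin{equation*}
\|S(t-s) P_N^2 F(s)\|_{L^r_{x,y}} \lesssim N^{-\delta(r)} |t-s|^{-\delta(r)} \|F(s)\|_{L^{r'}_{x,y}},
\end{equation*}
with the same bound minus the $N^{-\delta(r)}$ factor for $N \lesssim 1$. The estimate in the excerpt is stated for $t > 0$, but the symbol $p(\xi,\eta)$ is real, so time reversal gives the same bound for $t < 0$ with $|t-s|$ in place of $t-s$. Minkowski in $L^r_{x,y}$ then gives $\|T_N T_N^* F(t)\|_{L^r_{x,y}} \lesssim N^{-\delta(r)} \bigl( |t|^{-\delta(r)} * \|F(\cdot)\|_{L^{r'}_{x,y}} \bigr)(t)$.

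The admissibility $q^{-1} + r^{-1} = \tfrac12$ together with $r < \infty$ forces $q, r \in (2, \infty)$, hence $\delta(r) \in (0, 1)$ and $1 < q' < 2 < q < \infty$; moreover $\delta(r) = 2/q$ is exactly the Hardy--Littlewood--Sobolev scaling condition for convolution with $|t|^{-\delta(r)}$ to map $L^{q'}(\R) \to L^q(\R)$. Applying one-dimensional HLS in time gives $\|T_N T_N^* F\|_{L^q_t L^r_{x,y}} \lesssim N^{-\delta(r)} \|F\|_{L^{q'}_t L^{r'}_{x,y}}$, whence $\|T_N \varphi\|_{L^q_t L^r_{x,y}} \lesssim N^{-\delta(r)/2} \|P_N \varphi\|_{L^2}$ by the $TT^*$ equivalence, which is \eqref{S1.large}. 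The parallel argument for $N \lesssim 1$, with the $N$-factor omitted throughout, gives \eqref{S1.small}.

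There is no substantial obstacle: the condition $r < \infty$ removes the double endpoint $(q, r) = (2, \infty)$, so no Keel--Tao-type machinery is needed, and the low/high frequency split is already absorbed into the two cases of the dispersive estimate. The only bookkeeping item is to verify that $\delta(r) \in (0, 1)$ keeps the kernel $|t|^{-\delta(r)}$ locally integrable and places HLS in its classical range, and both follow from the strict inequalities $q > 2$ and $r > 2$.
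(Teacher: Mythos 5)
Your $TT^*$/Hardy--Littlewood--Sobolev argument is exactly the ``standard method'' the paper points to, and it correctly establishes \eqref{S1.large}, which concerns a \emph{single} dyadic block $N \gtrsim 1$. The gap is in \eqref{S1.small}: there $P_{\lesssim 1} = \sum_{N\lesssim 1} P_N$ is a sum over infinitely many dyadic blocks, and your ``parallel argument with the $N$-factor omitted'' does not give this. If you apply $TT^*$ block by block you obtain $\|S(t)P_N\varphi\|_{L^q_t L^r_{x,y}} \lesssim \|P_N\varphi\|_{L^2}$ uniformly in $N\lesssim 1$, but the triangle-inequality sum of these bounds diverges (each block contributes $\|P_N\varphi\|_{L^2}$, and these are not summable). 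If instead you try to run $TT^*$ directly on $T_{\lesssim 1}\varphi = S(t)P_{\lesssim 1}\varphi$, you need a pointwise dispersive bound for the kernel of $S(t)P_{\lesssim 1}$, and that is not available from the paper's Van der Corput estimate: the phase has $\phi''(\xi)\sim|\xi|$ vanishing at $\xi=0$, the estimate $|G_N|\lesssim t^{-1}$ holds on each annulus $|\xi|\sim N$ with a constant independent of $N$, and superposing over $N\lesssim 1$ gives a divergent series. The paper closes this exact gap by inserting the Littlewood--Paley square-function estimate
\begin{equation*}
\norm[L^q_t(\R,L^r(\R^2))]{u} \lesssim_{q,r} \Bigl( \sum_N \norm[L^q_t(\R,L^r(\R^2))]{P_N u}^2 \Bigr)^{1/2},
\end{equation*}
valid for $2\le q,r<\infty$, and then using Plancherel to resum $(\sum_{N\lesssim 1}\|P_N\varphi\|_{L^2}^2)^{1/2}\lesssim\|P_{\lesssim 1}\varphi\|_{L^2}$. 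You need to add this step; without it your argument proves the uniform frequency-localized low-frequency bound but not \eqref{S1.small}.
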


\begin{proof}[Sketch of proof]
    The estimate \eqref{S1.large} is an immediate consequence of \eqref{Strichartz:ST}. To obtain \eqref{S1.small}, we take $\beta=-1$ for simplicity and begin by computing the integral kernel $G_N(x,y,t)$ for the operator $S(t)P_N$.  We have 
    \begin{equation}
        \label{S.GN.int}
            G_N(x,y,t) = 
	           \frac{1}{(2\pi)^{\frac32} \sqrt{t}} 
		      \int |\xi|^{\frac12} \psi_N(\xi) e^{i\phi(\xi;x,y,t)} \,  d\xi
    \end{equation}
    where, for $\pm \xi>0$
    \begin{equation}
        \label{S.GN.phase}
        \phi(\xi;x,y,t) = \pm \frac{\pi}{4} 
                    + \left( x+ \frac{y^2}{4t} \right) \xi
	               +\xi^5 + \alpha \xi^3
    \end{equation}
    (compare Saut \cite{Saut93} and Ben Artzi-Saut \cite{BS99} where similar estimates are carried out for the solution operator of the third-order linear equation associated to the third-order KP II equation). Observing that $|\phi''(\xi)| \gtrsim |\xi|$ for $N \lesssim 1$, we can use the Van der Corput lemma (see, for example, \cite[Corollary in \S VIII.1.2]{Stein93}) to conclude that
    $$ |G_N(x,y,t)| \lesssim t^{-1}, \quad N \lesssim 1.$$ This implies that 
    $$ \norm[L^\infty]{S(t)P_N \varphi} \lesssim t^{-1}\norm[L^1]{P_N \varphi}$$ so that, by interpolation with the trivial $L^2 \to L^2$ estimate, 
    $$ \norm[L^{p'}]{S(t)P_N\varphi} \lesssim t^{1-\frac2p} \norm[L^p]{P_N \varphi}$$
    for $t>0$ and $p \in [1,2]$. It now follows by standard methods (see for example \cite[\S 2.3]{Tao06} that
    $$ \norm[L^q(R,L^p(\R^2)]{S(t)P_N \varphi} \lesssim \norm[L^2(\R^2)]{P_N \varphi}, \quad N \lesssim 1. $$ 
    Using the inequality
    $$
        \norm[L^q_t(\R,L^r(\R^2))]{u}
	       \lesssim_{\, q, r} 
	       	\left( 
	       		\sum_N \norm[L^q_t(\R,L^r(\R^2))]{P_N u}^2
	       	\right)^{\frac12}
$$
(see for example \cite[Section 3.1]{CKSTT08} or \cite[Exercise A.14]{Tao06}) we obtain \eqref{S1.small}.
    
\end{proof}

\begin{remark}
From \eqref{S1.large} we easily obtain
$$
\norm[L^q_t(\R,L^r(\R^2))]{S(t) P_{\gtrsim 1} \varphi}
\lesssim _{\, q,r} \norm[L^2(\R^2)]{P_{\gtrsim 1} \varphi}
$$
from which it follows that
\begin{equation}
\label{S1.complete}
\norm[L^q_t(\R,L^r(\R^2))]{S(t) \varphi} 
\lesssim_{\, q,r} \norm[L^2(\R^2)]{\varphi}.
\end{equation}
The estimate \eqref{S1.complete} also holds when $S(t)$ is the solution operator for the linear equation $v_t + v_{xxx} + \diff_x^{-1} v_{yy}=0$ reassociated to the third-order KP II equation \eqref{KPII-3} (see \cite{Saut93}, Proposition 2.3).
\end{remark}

Next, we prove the following bilinear estimate.  In what follows, we write
$\zeta$ for $(\xi,\eta)$,  the Fourier variables corresponding to $(x,y)$.

\begin{lemma}
\label{lemma:BS-1}
For $N_1 \lesssim N_2$, the estimate
\begin{equation}
\label{BS-1}
\norm[L^2(\R^3)]{
	\left( S(t) P_{N_1} \varphi_1 \right)	\cdot
	\left( S(t) P_{N_2} \varphi_2 \right)
	}
	\lesssim \left( \frac{N_1}{N_2} \right)^{\frac12}
	\norm[L^2(\R^2)]{P_{N_1} \varphi_1}\norm[L^2(\R^2)]{P_{N_2} \varphi_2}
\end{equation}
holds.
\end{lemma}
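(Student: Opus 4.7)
The plan is to reduce the bilinear estimate, via Plancherel's theorem and a coarea argument, to a uniform bound on the one-dimensional measure of a level set of the resonance function
\[
F(\xi_1,\eta_1;\xi,\eta) := p(\xi_1,\eta_1) + p(\xi-\xi_1,\eta-\eta_1).
\]
Since $\widehat{S(t)P_{N_j}\varphi_j}(\tau,\xi,\eta) = \psi_{N_j}(\xi)\widehat{\varphi_j}(\xi,\eta)\,\delta(\tau-p(\xi,\eta))$, Plancherel in $(t,x,y)$ identifies the square of the left side of \eqref{BS-1} with the $L^2$ norm of a convolution supported on $\{\tau = F\}$. The constraint $F = \tau$ is quadratic in $\eta_1$, because the $\eta$-dependence of $p$ enters only through $-\eta^2/\xi$, so the coarea formula lets me integrate out $\eta_1$ against the weight $1/|\partial_{\eta_1}F|$ at each of the (at most two) real roots. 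Applying Cauchy--Schwarz in $\xi_1$, and then undoing the change of variables $(\tau,\xi,\eta)\mapsto(\eta_1,\xi_2,\eta_2)$ to recover $\norm[L^2]{\varphi_1}^2\norm[L^2]{\varphi_2}^2$ on one side, the problem reduces to the pointwise bound
\[
A(\tau,\xi,\eta) := \int \sum_{F(\xi_1,\eta_1) = \tau} \frac{\psi_{N_1}(\xi_1)^2 \psi_{N_2}(\xi-\xi_1)^2}{|\partial_{\eta_1}F|}\,d\xi_1 \;\lesssim\; \frac{N_1}{N_2},\quad \text{uniformly in }(\tau,\xi,\eta).
\]

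The key observation is that
\[
\partial_{\eta_1}F = -\frac{2\eta_1}{\xi_1} + \frac{2(\eta-\eta_1)}{\xi-\xi_1} = \frac{2(\eta_1\xi - \eta\xi_1)}{\xi_1(\xi-\xi_1)}
\]
depends only on the non-local term $-\eta^2/\xi$ of $p$; the dispersive terms $\xi^5$ and $\alpha\xi^3$ are $\eta$-independent and do not enter. This expression therefore coincides with the resonance derivative in the third-order KP II analysis of Hadac--Herr--Koch \cite[Prop.~3.1]{HHK09}. Solving the quadratic $F = \tau$ for $\eta_1$ explicitly yields $|\partial_{\eta_1}F| = 2\sqrt{|\xi R - \eta^2|/|\xi_1(\xi-\xi_1)|}$ at the roots, where $R = R(\tau,\xi,\xi_1)$ comes from the constant term of the quadratic. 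Changing variables in the $\xi_1$-integral from $\xi_1$ to $\mu := \xi R - \eta^2$, with Jacobian involving the $\xi_1$-derivatives of $\xi^5$ and $\alpha\xi^3$, converts $A$ to an integral of the form $\int d\mu/\sqrt{|\mu|}$ over a range controlled by the $\xi_1$-support of length $\sim N_1$, producing $A\lesssim N_1/N_2$; the ratio $N_1/N_2$ reflects the $\xi_1$-support length against the transversality scale $|\xi-\xi_1|\sim N_2$ in $\partial_{\eta_1}F$.

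The main obstacle will be the regime where $\xi_1$ and $\xi-\xi_1$ have opposite signs, so that $\xi_1(\xi-\xi_1)<0$ and the leading coefficient of the quadratic in $\eta_1$ changes sign, forcing the lower bound on $|\partial_{\eta_1}F|$ to be re-derived from a positive discriminant. Opposite signs with $|\xi_1|\sim N_1$ and $|\xi-\xi_1|\sim N_2$ however force $|\xi|\lesssim N_1$, which is consistent with $|\xi-\xi_1|\sim N_2$ only when $N_1\sim N_2$; in that case the claimed factor $(N_1/N_2)^{1/2}\sim 1$ is trivial and follows from H\"older's inequality and the $L^4_{t,x,y}$ Strichartz estimate implied by Lemma \ref{lemma:Strichartz} (taking $q=r=4$). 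Thus the argument is essentially a transfer of \cite[Prop.~3.1]{HHK09} to the fifth-order setting: the additional $\xi^5$ dispersion contributes only to the $\xi_1$-Jacobian, where it gives additional smoothing not needed for this bound, and plays no role in the $\eta$-resonance structure where the gain originates.
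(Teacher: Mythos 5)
Your argument is correct in substance and reaches the stated bound, and the overall strategy is the same as the paper's: Plancherel in $(t,x,y)$, reduction to a pointwise bound on a convolution weight via Cauchy--Schwarz, and a change of variables that exploits the resonance structure. The route you take is, however, organized differently enough to be worth contrasting. Where the paper makes the $2$-dimensional change of variables $(\xi_1,\eta_1) \mapsto (u,w)$ with $w = \nu$ the cubic polynomial in \eqref{nu}, computes the Jacobian as $\left|\partial_{\eta_1}u\cdot\partial_{\xi_1}\nu\right|$, and then controls $\int dw' / (|w'|^{1/2}|u-p(\zeta)-w'|^{1/2})$ using the sign-coherence \eqref{nu.range}, you instead parametrize the level set $\{F = \tau\}$ by $\xi_1$ (coarea in $\eta_1$), and then change $\xi_1 \mapsto \mu$, where $\mu$ is essentially the discriminant of the quadratic in $\eta_1$. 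Unwinding the definitions one finds $\mu = -\xi\bigl[\nu + (\tau - p(\zeta))\bigr]$, so the two substitutions are close cousins, but yours has a pleasant practical advantage: the range of $\mu$ is bounded directly by the length of the $\xi_1$-support times the Jacobian $|\xi\, g'(\xi_1)|$, so you never need the sign-coherence of \eqref{resonance} that the paper invokes; you only need $|g'(\xi_1)|\gtrsim N_2^2$, which comes from the $\alpha\xi^3$ term alone. Carrying out the computation you sketch gives $A \lesssim N_1\sqrt{N_2}/\sqrt{|\xi\, g'|}$, hence $A\lesssim N_1/N_2^2$ when $N_2\gtrsim 1$ and $A\lesssim N_1/N_2$ when $N_2\lesssim 1$; this matches the paper's refined bound \eqref{bs.estimate} exactly, and both bounds imply \eqref{BS-1}.

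One genuine error, though a harmless one: your claim that $\xi_1(\xi-\xi_1)<0$ together with $|\xi_1|\sim N_1$, $|\xi-\xi_1|\sim N_2$ forces $|\xi|\lesssim N_1$ and hence $N_1\sim N_2$ is false. If, say, $\xi_1\sim N_1>0$ and $\xi-\xi_1\sim -N_2<0$ with $N_1\ll N_2$, then $\xi = \xi_1 + (\xi-\xi_1)\sim -N_2$, so $|\xi|\sim N_2$, and the opposite-sign configuration is perfectly consistent with $N_1\ll N_2$. Fortunately you do not actually need the dismissal: at a real root of the quadratic in $\eta_1$ one has $|\partial_{\eta_1}F|^2 = 4\bigl(\xi(g(\xi_1)-\tau)-\eta^2\bigr)/\bigl(\xi_1(\xi-\xi_1)\bigr)$, which must be nonnegative for a real root to exist regardless of the sign of $\xi_1(\xi-\xi_1)$, so $|\partial_{\eta_1}F| = 2\sqrt{|\mu|/|\xi_1(\xi-\xi_1)|}$ holds verbatim in both sign configurations and the main estimate on $A$ goes through unchanged. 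You should simply delete the appeal to $N_1\sim N_2$ for the opposite-sign regime and note that the discriminant formula is sign-agnostic; the H\"older--Strichartz fallback is still needed (as in the paper) for the genuinely comparable case $N_1\sim N_2$, where $|\xi|$ can degenerate and the Jacobian bound $|g'|\gtrsim N_2^2$ can fail.
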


\begin{proof}
First, if $N_1 \sim N_2$,  we obtain \eqref{BS-1} by applying H\"{o}lder's inequality and using \eqref{S1.complete} with $q=r=4$.  Henceforth
we assume $N_1 \ll N_2$. 

Second, we note the resonance identity (e.g.  \cite[equation (7)]{ST00})
\begin{equation}
\label{resonance}
p(\zeta_1) + p(\zeta-\zeta_1) - p(\zeta) =
\nu + \frac{(\xi_1 \eta - \xi \eta_1)^2}{\xi(\xi-\xi_1)\xi_1}
\end{equation}
where $p$ is given by \eqref{S.symbol} and 
\begin{equation}
\label{nu}
\nu = \xi(\xi-\xi_1) \xi_1 \left( 5(\xi^2-\xi \xi_1 +\xi_1^2) + 3\alpha \right)
\end{equation}
has the same sign as the second right-hand term in \eqref{resonance} provided $\alpha>0$.
From these equations we have the identity
\begin{equation}
\label{resonance.est}
\left| 
	p(\zeta_1) + p(\zeta-\zeta_1) - p(\zeta) - \nu
\right|
=
\frac{|\xi_1\eta - \xi \eta_1|^2}
	{|\xi| |\xi-\xi_1| |\xi_1|}
\end{equation}
and the inequality
\begin{equation}
\label{nu.range}
0 \leq |\nu| \leq |p(\zeta) + p(\zeta-\zeta_1) - p(\zeta)|.
\end{equation}

Third,  from the Fourier representation
\begin{multline*}
\left( S(t) P_{N_1} \varphi_1 \right) \cdot 
\left( S(t) P_{N_2} \varphi_2 \right) (x,y) =\\
\frac{1}{(2\pi)^2}
	\iint e^{i\phi(t,x,y,\zeta_1, \zeta_2)} 
		\psi_{N_1}(\xi_1) \psi_{N_2}(\xi_2)
		 \widehat{\varphi_1}(\zeta_1) 
									\widehat{\varphi_2}(\zeta_2) \
		\,  d\zeta_1  \,  d\zeta_2,
\end{multline*}
where
$$
\phi(t,x,y,\zeta_1,\zeta_2) =
	x(\xi_1 +\xi_2) + y (\eta_1 + \eta_2) -t(p(\zeta_1) + p(\zeta_2)),
$$
we see that $\left( S(t) P_{N_1} \varphi_1 \right)	\cdot
\left( S(t) P_{N_2} \varphi_2 \right)$ is the inverse Fourier transform
(in $\lambda$, $\xi$, and $\eta$) of
$$ I(\lambda,\xi,\eta)=
	\int \delta_0(\lambda-p(\zeta_1) - p(\zeta)) 
			\psi_{N_1}(\xi_1) \psi_{N_2}(\xi-\xi_1)
			\widehat{\varphi_1}(\zeta_1)
			\widehat{\varphi_2}(\zeta-\zeta_1)
	\,  d\zeta_1.
$$
By the Plancherel theorem,  it suffices to estimate the $L^2$ norm
of $I(\lambda,\xi,\eta)$,  given by
$$
\sup_{g \in L^2(\R^3): \,\, \norm[L^2]{g}=1}
	\left|
		\int g(\lambda,\zeta) I(\lambda,\xi,\eta) \, d\lambda \, d\zeta
	\right|.
$$
A short computation shows that 
\begin{multline*}
\int g(\lambda,\zeta) I(\lambda,\xi,\eta) \,  d\lambda \, d\zeta=\\
\int g(p(\zeta_1) + p(\zeta-\zeta_1),\zeta)
	\psi_{N_1}(\zeta_1) \psi_{N_2}(\zeta-\zeta_1) 
	\widehat{\varphi_1}(\zeta_1)  
	\widehat{\varphi_2}(\zeta-\zeta_1) 
\,  d\zeta_1 \, d\zeta.
\end{multline*}
We make the change of variables
$$ u=p(\zeta_1) + p(\zeta-\zeta_1),
	\quad
	v=\zeta,
	\quad
	w=\nu
$$
where $J=|\diff(u,v,w)/\diff(\zeta_1,\zeta)|$.  Assuming that $J \neq 0$ it then suffices to bound
\begin{multline*}
\iiint g(u,v) \frac{\psi_{N_1} \psi_{N_2} \widehat{\varphi_1} \widehat{\varphi_2}}{J} \,  du\,  dv\,  dw
\end{multline*}
which is itself bounded in modulus by
$$
\iint |g(u,v)| \left( \int \frac{|\tilde{\psi}_{N_1} \tilde{\psi}_{N_2}|}{J} \, dw' \right)^\frac12
					\left( \int \frac{|\psi_{N_1}\psi_{N_2} \widehat{\varphi_1} \widehat{\varphi_2}|^2}{J} \,  dw' \right)^\frac12 \,  du \,  dv
$$
where $\tilde{\psi}_{N_1}$ (resp.\ $\tilde{\psi}_{N_2}$) is identically $1$ on the support of $\psi_{N_1}$ (resp.\ $\psi_{N_2}$) and  $|\xi_1| \sim N_1$ (resp.\ $|\xi-\xi_1| \sim N_2$).  This integral is,  in turn,  bounded by
\begin{equation}
\label{bs.to.estimate}
 \left( 
			\int \frac{\tilde{\psi}_{N_1} \tilde{\psi}_{N_2}}{J} \, dw' 
	\right)^\frac12
	 \norm[L^2]{P_{N_1} \varphi_1} \norm[L^2]{P_{N_2}\varphi_2} 
\end{equation}
since $\norm[L^2]{g}=1$.  

It now suffices to show that $J>0$ and obtain effective bounds on $J$.  A short computation shows that
$$ J = \left| \frac{\diff u}{\diff \eta_1} \cdot \frac{\diff \nu}{\diff \xi_1} \right|. $$
We have from \eqref{resonance} and \eqref{resonance.est} that
\begin{equation}
\label{du.deta}
\left| \frac{\diff u}{\diff \eta_1} \right| = 2 \left| \frac{\xi_1 \eta - \xi \eta_1}{\xi(\xi-\xi_1)\xi_1} \right| \,  |\xi| =
2\frac{|u-p(\zeta)-\nu|^{\frac12}|\xi|}{|\xi|^\frac12 |\xi-\xi_1|^\frac12 |\xi_1|^\frac12}.
\end{equation}
From  \eqref{nu} it is easy to deduce that for $|\xi| \gg |\xi_1|$,
\begin{equation}
\label{nu.est}
|\nu|	\sim
\begin{cases}
|\xi_1| \, |\xi-\xi_1|^4,		&	N_2 \gtrsim 1\\
|\xi_1| \, |\xi-\xi_1|^2,		&	N_2 \lesssim 1
\end{cases}
\end{equation}
and
\begin{equation}
\label{dnu.est}
\left| \frac{\diff \nu}{\diff \xi_1} \right|
\sim
\begin{cases}
|\xi-\xi_1|^4,	&	N_2 \gtrsim 1\\
|\xi-\xi_1|^2,	&	N_2	\lesssim 1\\
\end{cases}
\end{equation}
From \eqref{du.deta},  \eqref{nu.est},  and \eqref{dnu.est},  we conclude that,
for $N_2 \gtrsim 1$,
\begin{align}
\label{J.highN}
J	
	&\gtrsim 	
		\frac	{|u-p(\zeta)-\nu|^\frac12 |\xi| \, |\xi-\xi_1|^4}
				{|\xi|^\frac12 |\xi-\xi_1|^\frac12 |\xi_1|^\frac12}\\[5pt]
\nonumber
	&\gtrsim
		\frac{|u-p(\zeta)-\nu|^\frac12 |\xi|\, |\nu|^\frac12 |\xi-\xi_1|^{2}}
			{|\xi|^\frac12 |\xi-\xi_1|^\frac12 |\xi_1|}\\
\nonumber
	&\gtrsim
		|u-p(\zeta)-\nu|^\frac12 \, |\nu|^\frac12 \cdot \frac{|\xi-\xi_1|^2}{|\xi_1|}
\end{align}
From the same identity and estimates we conclude that, for $N_2 \lesssim 1$,
\begin{align}
\label{J.lowN}
J	
	&\gtrsim 	
		\frac	{|u-p(\zeta)-\nu|^\frac12 |\xi| \, |\xi-\xi_1|^2}
				{|\xi|^\frac12 |\xi-\xi_1|^\frac12 |\xi_1|^\frac12}\\[5pt]
\nonumber
	&\gtrsim
		\frac	{|u-p(\zeta)-\nu|^\frac12 |\xi| \,  |\nu|^{\frac12} |\xi-\xi_1|}
				{|\xi|^\frac12 \, |\xi-\xi_1|^\frac12 \, |\xi_1|}\\[5pt]
\nonumber
	&\gtrsim
		|u-p(\zeta)-\nu|^\frac12 \, |\nu|^\frac12 \cdot \frac{|\xi-\xi_1|}{|\xi_1|}
\end{align}
To bound the integral factor in \eqref{bs.to.estimate},  we now use \eqref{J.highN} or \eqref{J.lowN} together with the estimate 
\begin{equation}
\label{bs.to.conclude}
\int \frac{1}{|u-p(\zeta)-w'|^\frac12 \, |w'|^\frac12} \,  dw'
\lesssim 1.
\end{equation}
Here  $\nu = w'$,  and the estimate follows from the facts that 
$$0 \leq |w'| \leq |u-p(\zeta)|$$
in virtue of \eqref{nu.range}, and the trivial estimate
$$\int_0^a \frac{1}{x^\frac12 (a-x)^\frac12} \,  dx = \bigO{1}$$ 
uniformly in $a>0$.  We now conclude that
\begin{multline}
\label{bs.estimate}
\norm[L^2(\R^3)]{\left
								(S(t) P_{N_1} \varphi_1\right) \cdot 
								\left(S(t) P_{N_2} \varphi_2 \right)}	\\
\lesssim 
	\begin{cases}
	\left(\dfrac{N_1}{N_2^2}\right)^\frac12
			 \norm[L^2]{P_{N_1}\varphi_1} 
			 \norm[L^2]{P_{N_2}\varphi_2}
		&	N_2 \gtrsim 1,\\
	\\
	\left(\dfrac{N_1}{N_2}\right)^\frac12 
			\norm[L^2]{P_{N_1}\varphi_1} 
			\norm[L^2]{P_{N_2}\varphi_2}
		&	N_2 \lesssim 1.
	\end{cases}
\end{multline}
This implies \eqref{BS-1}.
\end{proof}

Owing to the Strichartz estimate \eqref{S1.complete} and the bilinear Strichartz estimate \eqref{BS-1},  we can use Proposition \ref{prop:transfer} and interpolation estimates from \cite[Proposition 2.20]{HHK09} to deduce the following.

\begin{proposition}
\label{prop:estimates}
Suppose that $\alpha>0$. 
For any $2 \leq q,r  < \infty$ with 
$$q^{-1} + r^{-1} = \frac12,$$
the estimates
\begin{align}
\label{UP.S1}
\norm[L^q_t(\R,L^r(\R^2))]{u}		&\lesssim \norm[U^q_S]{u}\\
\intertext{and}
\label{VP.S1}
\norm[L^4(\R^3)]{u}		&\lesssim \norm[V^p_{-,S}]{u},	&1 \leq p < 4
\end{align}
hold. 
Moreover
\begin{align}
\label{Up.Bi}
\norm[L^2(\R^3)]{P_{N_1} u P_{N_2} v}	
	&\lesssim	\left( \frac{N_1}{N_2} \right)^\frac12 
						\norm[U^2_S]{P_{N_1}u}
						\norm[U^2_S]{P_{N_2}v}
\end{align}
and
\begin{multline}
\label{Vp.Bi}
\norm[L^2(\R^3)]{P_{N_1} u P_{N_2} v}	\\
	\lesssim	\left( \frac{N_1}{N_2} \right)^\frac12 
					\left( 1 + \log\left(\frac{N_2}{N_1}\right)\right)^2\
						\norm[V^2_S]{P_{N_1}u}
						\norm[V^2_S]{P_{N_2}v}
\end{multline}
\end{proposition}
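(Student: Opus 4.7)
The strategy is a two-step procedure of the type already used in \cite{HHK09}: first lift the linear Strichartz estimate \eqref{S1.complete} and the bilinear Strichartz estimate \eqref{BS-1} from free solutions of the linear equation to $U^p_S$ functions by means of the transfer principle (Proposition \ref{prop:transfer}); then invoke the interpolation results of \cite[Proposition 2.20]{HHK09} to replace $U^p_S$ norms on the right-hand side by $V^p_{-,S}$ norms where required.

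For \eqref{UP.S1}, I would apply Proposition \ref{prop:transfer} with $n = 1$, taking $T_0$ to be the identity embedding $L^2 \hookrightarrow L^1_\loc$; the hypothesis $\norm[L^q_t(\R,L^r(\R^2))]{T_0(S(\cdot)\varphi)} \lesssim \norm[L^2]{\varphi}$ is then precisely \eqref{S1.complete}, and the conclusion is \eqref{UP.S1}. For \eqref{Up.Bi}, I would apply Proposition \ref{prop:transfer} with $n = 2$ to the pointwise product $T_0(\varphi_1,\varphi_2) = \varphi_1 \varphi_2$; since the projections $P_{N_i}$ commute with $S(t)$, the bilinear bound \eqref{BS-1} applied to $P_{N_1}\varphi_1$ and $P_{N_2}\varphi_2$ supplies the required hypothesis on free solutions, and the transfer conclusion is \eqref{Up.Bi}.

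Estimate \eqref{VP.S1} carries no logarithmic loss and follows from \eqref{UP.S1} at the endpoint $q = r = 4$ together with the continuous inclusion $V^p_{-,S} \hookrightarrow U^q_S$ valid for $p < q$, a standard consequence of the interpolation theory of \cite[Prop.\ 2.20]{HHK09}. For \eqref{Vp.Bi}, I would combine the sharp bilinear $U^2_S$ bound \eqref{Up.Bi} with constant $(N_1/N_2)^{1/2}$ with a coarser $U^4_S$ bilinear bound obtained from H\"{o}lder's inequality and \eqref{UP.S1} at $q = r = 4$, namely
\begin{equation*}
\norm[L^2(\R^3)]{P_{N_1} u \cdot P_{N_2} v} \lesssim \norm[U^4_S]{P_{N_1} u} \norm[U^4_S]{P_{N_2} v}.
\end{equation*}
The logarithmic interpolation of \cite[Prop.\ 2.20]{HHK09}, applied to each of the two factors in turn, trades each $U^2_S$ norm for a $V^2_S$ norm at the cost of a factor $1 + \log(N_2/N_1)$, yielding the overall factor $(N_1/N_2)^{1/2} \bigl(1+\log(N_2/N_1)\bigr)^2$ of \eqref{Vp.Bi}.

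The main technical point is the bilinear logarithmic interpolation producing \eqref{Vp.Bi}: one must verify that two $U^p_S$ bounds with constants $(N_1/N_2)^{1/2}$ (sharp, at $p=2$) and $O(1)$ (rough, at $p=4$) combine, in the bilinear scheme of \cite[Prop.\ 2.20]{HHK09}, to the claimed $V^2_S$ bound; the power two on the logarithm reflects the fact that the interpolation must be performed separately on each of the two factors. All remaining assertions of the proposition follow either directly from the transfer principle or from the standard $U^p$--$V^p$ embedding theory recalled in Section \ref{subsec:UV}.
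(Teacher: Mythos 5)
Your proposal is correct and matches the paper's proof essentially step by step: the paper likewise obtains \eqref{UP.S1} and \eqref{Up.Bi} from \eqref{S1.complete}, \eqref{BS-1}, and the transfer principle (Proposition \ref{prop:transfer}), derives \eqref{VP.S1} from \eqref{UP.S1} and the embedding $V^p_{-,\rc} \hookrightarrow U^q$ for $p<q$, and derives \eqref{Vp.Bi} from \eqref{Up.Bi} by the logarithmic interpolation in the proof of \cite[Corollary 2.21]{HHK09}. The only slip is a citation: the $V^p_{-,\rc} \hookrightarrow U^q$ embedding used for \eqref{VP.S1} is \cite[Corollary 2.6]{HHK09} rather than Proposition 2.20 (the latter being the logarithmic interpolation you correctly invoke for \eqref{Vp.Bi}).
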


\begin{proof}
We only comment on the proofs of \eqref{VP.S1}--\eqref{Vp.Bi} since the others are immediate consequences of Proposition \ref{prop:transfer} and the previous estimates.  Exactly as in the proof of \cite[Corollary 2.21]{HHK09},  \eqref{VP.S1} follows from \eqref{UP.S1} and the continuity of the embedding $V^p_{-,\mathrm{rc}}$ into $U^q$ for $1 \leq p < q$ (see \cite[Corollary 2.6]{HHK09}). The estimate \eqref{Up.Bi} follows from \eqref{BS-1} and Proposition \ref{prop:transfer} for $n=2$.
The estimate \eqref{Vp.Bi} follows from \eqref{Up.Bi} by the interpolation argument in the proof of \cite[Corollary 2.21]{HHK09}.
\end{proof}

\section{Estimates on the Bilinear Form}

In what follows we define, for $T>0$ and $u,v \in C(\R,H^{1,1}(\R^2))$
\begin{equation}
	\label{KPII-5-IT-bis}
	I_T(u,v) = \frac12 \int_0^t \mathbbm{1}_T(s) S(t-s) (u(s)v(s))_x \, ds
\end{equation}
where $\mathbbm{1}_T$ is the characteristic function of $[0,T]$. We also define
\begin{equation}
	\label{KPII-5-IT-infty}
	I_\infty(u,v) = \frac12 \int_0^t  S(t-s) (u(s)v(s))_x \, ds
\end{equation}

\subsection{Fundamental Estimate}

First, we will prove (modulo the estimates proved in Lemma \ref{lemma:J1J2} in what follows):
\begin{proposition}
	\label{prop:IT.est}
	The map $I_T$ extends to a bounded bilinear map from $\dot{Z}^{-\frac12} \times \dot{Z}^{-\frac12}$ to $\dot{Z}^{-\frac12}$ with
	\begin{equation}
		\label{IT.bdYY}
			\norm[\dot{Z}^{-\frac12}]{I_T(u,v)}
			\leq
			C 	\norm[\dot{Y}^{-\frac12}]{u}
				\norm[\dot{Y}^{-\frac12}]{v}
	\end{equation}
	where $C$ is independent of $T$.
\end{proposition}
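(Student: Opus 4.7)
The plan is to use $U^2_S$-duality to convert the $\dot{Z}^{-\frac12}$-norm of $I_T(u,v)$ into a trilinear form, decompose dyadically and by modulation, and close the estimate using the bilinear Strichartz bound of Proposition \ref{prop:estimates} together with the fifth-order resonance identity \eqref{resonance.est}. By definition \eqref{Zs.norm} and the duality formula \eqref{Ups.norm.dual},
\begin{equation*}
\norm[\dot{Z}^{-\frac12}]{I_T(u,v)}^2 = \sum_{N_3} N_3^{-1} \norm[U^2_S]{P_{N_3} I_T(u,v)}^2,
\end{equation*}
and each $U^2_S$-norm equals the supremum over $w$ with $\norm[V^2_S]{w} \le 1$ of a trilinear pairing of the shape $\int_0^T \langle \partial_x P_{N_3} w, \, (uv)(t)\rangle_{L^2(\R^2)}\, dt$, after integration by parts that moves $\partial_t$ off the test function and through the Duhamel integral. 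Inserting $u = \sum_{N_1} P_{N_1} u$ and $v = \sum_{N_2} P_{N_2} v$, Fourier-support considerations restrict the triples $(N_1,N_2,N_3)$ to the two regimes (HL) $N_{\min} \ll N_{\mathrm{med}} \sim N_{\max}$ and (HH) $N_1 \sim N_2 \gtrsim N_3$.

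In each regime I would introduce the modulation cutoff \eqref{QS.small}--\eqref{QS.large} at a level $M$ chosen just below the resonance lower bound from \eqref{nu.est}; the resonance identity \eqref{resonance} forces the piece in which all three factors lie in $Q_{S<M}$ to have empty Fourier support, so at least one factor must carry $Q^S_{\geq M}$. By \eqref{Q.big1}, that factor gives an $L^2_{t,x,y}$-gain of $M^{-\frac12}$. The remaining two factors are then paired via \eqref{Up.Bi} (or \eqref{Vp.Bi}, depending on where the $Q^S_{\geq M}$ sits), producing the geometric factor $(N_{\min}/N_{\max})^{\frac12}$, while $\partial_x$ contributes a derivative of size $N_{\max}$. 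Combining these inputs with the weights $N_1^{-\frac12} N_2^{-\frac12} N_3^{-\frac12}$ from the $\dot{Y}^{-\frac12}$ and $\dot{Z}^{-\frac12}$ sums, and the fifth-order resonance bound $M \gtrsim N_{\min} N_{\max}^4$ (valid once $N_{\max} \gtrsim 1$), leaves a power $(N_{\min}/N_{\max})^\gamma$ with $\gamma>0$, which is summable against $\norm[\dot{Y}^{-\frac12}]{u}\norm[\dot{Y}^{-\frac12}]{v}$ by two applications of Cauchy-Schwarz.

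The main obstacle will be the high-high-low regime $N_1 \sim N_2 \gg N_3$, where the bilinear gain is only $(N_3/N_{\max})^{\frac12}$ and the $V^2_S$-version \eqref{Vp.Bi} carries a $\log^2(N_{\max}/N_3)$ loss; here the fifth-order modulation gain is decisive, as $M^{-\frac12}$ combined with $M \gtrsim N_3 N_{\max}^4$ still leaves a polynomial gain in $N_3/N_{\max}$ that absorbs the logarithm. I expect that Lemma \ref{lemma:J1J2} is set up precisely to package the two arising subcases, corresponding to the $Q^S_{\geq M}$-projection placed on the output and on one of the inputs respectively, uniformly in their frequency-modulation configuration. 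The low-frequency case $N_{\max} \lesssim 1$ is easier because one only needs the cubic resonance $|\nu| \gtrsim N_{\min} N_{\max}^2$, with no smoothing required. Finally, the constant $C$ is independent of $T$ because the cutoff $\mathbbm{1}_T$ only restricts the temporal domain of integration, while the $U^p_S$, $V^p_S$, and bilinear Strichartz estimates all live on $\R$.
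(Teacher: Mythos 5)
Your overall architecture --- $U^2_S$-duality, dyadic plus modulation decomposition, bilinear Strichartz, and the resonance identity to annihilate the all-low-modulation piece --- coincides with the paper's strategy, and the scheme you outline would close. But you misattribute where the fifth-order smoothing enters. The paper's proof of Proposition \ref{prop:IT.est}, via Lemma \ref{lemma:J1J2}, chooses the \emph{cubic} modulation threshold $M = N_1 N_2 N_3/8$ (see \eqref{M.choice1}), and the text there emphasizes that the lower bound \eqref{lambda.M} ``makes no use of the summand in \eqref{KPII5.resonance} coming from the fifth-order differential operator.'' The cubic gain $M^{-1/2}$, combined with the polynomial factor $(N_{\min}/N_{\max})^{1/2}$ from \eqref{Up.Bi}--\eqref{Vp.Bi}, already absorbs the $\log^2$ loss in both the low-high and high-high regimes, exactly as in Hadac--Herr--Koch for the third-order equation. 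Your claim that the fifth-order modulation gain is ``decisive'' in the high-high case is therefore false; it only becomes genuinely necessary in Lemma \ref{lemma:J1J2.L2}, which proves the stronger $\dot{Z}^0$ estimate of Proposition \ref{prop:IT.L2}. Using the larger $M \gtrsim N_{\min} N_{\max}^4$ here is not incorrect --- it merely gives excess room --- but it obscures the fact that the bilinear $\dot{Z}^{-\frac12}$ estimate holds already at the level of cubic dispersion.

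A second, smaller inaccuracy: you describe \eqref{pre-bilinear-1} and \eqref{pre-bilinear-2} as packaging ``the two arising subcases, corresponding to the $Q^S_{\geq M}$-projection placed on the output and on one of the inputs.'' They are organized instead by dyadic regime: \eqref{pre-bilinear-1} is the low-high case $N_1 \ll N_2 \sim N_3$ with the $N_1$-sum already taken by Cauchy--Schwarz, while \eqref{pre-bilinear-2} is the comparable case $N_1 \sim N_2 \gtrsim N_3$ with an $\ell^2$-in-$N_3$ structure, and the $N_1 \sim N_2$ sum is handled by Minkowski afterward. Within each regime, every placement of $Q^S_{\geq M}$ must be estimated separately, so ``two applications of Cauchy--Schwarz'' undersells the bookkeeping in the comparable-frequency case. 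Finally, the uniformity in $T$ is indeed for the reason you state: $\mathbbm{1}_T$ only localizes the time interval, and the underlying $V^2_S$ and Strichartz bounds are global in time.
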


\begin{remark}
	\label{rem:IT}
	It follows from the continuous embedding $\dot{Z}^{-\frac12} \hookrightarrow \dot{Y}^{-\frac12}$ that
	\begin{align}
		\label{IT.bdZZ}
		\norm[\dot{Z}^{-\frac12}]{I_T(u,v)}
		\leq C' \norm[\dot{Z}^{-\frac12}]{u}
			\norm[\dot{Z}^{-\frac12}]{v}	
	\intertext{and}
		\label{IT.bdYYbis}
		\norm[\dot{Y}^{-\frac12}]{I_T(u,v)}
		\leq C'' \norm[\dot{Y}^{-\frac12}]{u}
				\norm[\dot{Y}^{-\frac12}]{v}
	\end{align}
	with $C'$, $C''$ independent of $T$. 
\end{remark}

\begin{proof}
	The proof follows closely the proof of \cite[Theorem 3.2]{HHK09}:  we sketch the details for the reader's convenience since the same strategy of proof will be used later to prove further estimates on the bilinear form. By the Littlewood-Paley decomposition, we have
	$$  I_T(u,v) = \sum_{N_1,N_2}  I_T (u_{N_1},v_{N_2}) $$
	where  $u_{N_1} = P_{N_1} u$ and $v_{N_2} = P_{N_2} v$. By bilinearity and symmetry it suffices to show that
	\begin{align}
		\label{IT.J1}
		\norm[\dot{Z}^{-\frac12}]{\sum_{N_2} \sum_{N_1 \ll N_2} I_T(u_{N_1},v_{N_2})}
			&	\lesssim 
					C 	\norm[\dot{Y}^{-\frac12}]{u}
						\norm[\dot{Y}^{-\frac12}]{v}
		\intertext{and}
		\label{IT.J2}
		\norm[\dot{Z}^{-\frac12}]{\sum_{N_2} \sum_{N_1 \sim N_2} I_T(u_{N_1},v_{N_2})}
			&	\lesssim 
					C 	\norm[\dot{Y}^{-\frac12}]{u}
						\norm[\dot{Y}^{-\frac12}]{v}
	\end{align}
	for a constant $C$ independent of $T$. 
	To estimate the left-hand sides of \eqref{IT.J1} and \eqref{IT.J2}, we exploit the facts that 
	\begin{align}
		\label{IT.J1bis}
		\norm[\dot{Z}^{-\frac12}]{\sum_{N_1 \ll N_2} I_T(u_{N_1},v_{N_2})}^2
		& = \sum_{N_3} N_3^{-1} \norm[U^2_S]{P_{N_3} \sum_{N_1 \ll N_2}I_T(u_{N_1}, v_{N_2})}^2
		\intertext{and}
		\label{IT.J2bis}
		\norm[\dot{Z}^{-\frac12}]{\sum_{N_1 \sim N_2} I_T(u_{N_1},v_{N_2})}^2
		& = \sum_{N_3} N_3^{-1} \norm[U^2_S]{P_{N_3} \sum_{N_1 \sim  N_2}I_T(u_{N_1}, v_{N_2})}^2
	\end{align} 
	and use duality to obtain summable estimates on the $U^2_S$ norms. To bound \eqref{IT.J1} and \eqref{IT.J2} in terms of $\norm[\dot{Z}^{-\frac12}]{u}$ and $\norm[\dot{Z}^{-\frac12}]{v}$ we exploit the respective estimates \eqref{pre-bilinear-1} and \eqref{pre-bilinear-2} in Lemma \ref{lemma:J1J2} below. 
	
	First consider \eqref{IT.J1}. For $N_1 \ll N_2$ we we obtain nonzero contributions when $N_3 \sim N_2$. By \eqref{Ups.norm.dual}, we have
	\begin{align*}
		\Bigl\| 
				P_{N_3} 
				\sum_{N_1 \ll N_2} & I_T(u_{N_1}, v_{N_2}) 
		\Bigr\|_{U^2_S}\\
			&\lesssim	
				\sup_{\substack{\diff_t w \in C_0^\infty(\R^3),\\ \norm[V^2]{w}\leq 1}}
					\left|
						\sum_{N_1 \ll N_2} \int \langle  u_{N_1}(t) v_{N_2}(t), \diff_x P_{N_3} S(-t)w \rangle_{L^2} \, dt 
					\right|\\
			&=  \sup_{\substack{\diff_t w \in C_0^\infty(\R^3),\\ \norm[V^2]{w}\leq 1}}
					\left|
						\sum_{N_1 \ll N_2} \int \langle  u_{N_1}(t) v_{N_2}(t), \diff_x P_{N_3} w \rangle_{L^2} \, dt 
					\right|\\
			&\lesssim  
					\left( 
							\sum_{N_1 \ll N_2} N_1^{-1}\norm[V^2_S]{u_{N_1}}^2 
					\right)^\frac12
			\norm[V^2_S]{v_{N_2}}  
	\end{align*} 
	where in the last step we used \eqref{pre-bilinear-1}, the fact that  $N_2 \sim N_3$ and the bound $\norm[L^2]{\diff_x w_{N_3}} \lesssim N_3 \norm[L^2]{w_{N_3}}$. Using this estimate in the right-hand side of \eqref{IT.J1} and using the fact that $N_2 \sim N_3$ we recover
	\begin{equation}
		\label{IT.J1.Final}
		\norm[\dot{Z}^{-\frac12}]{\sum_{N_1 \ll N_2} I_T(u_{N_1},v_{N_2})}^2
			\lesssim	
				\norm[\dot{Y}^{-\frac12}]{u}^2 \norm[\dot{Y}^{-\frac12}]{v}^2
	\end{equation}
	with a constant independent of $T$. 
	
	Next, consider \eqref{IT.J2}. As $N_1 \sim N_2$ we obtain nonzero contributions for $N_3 \lesssim N_2$.  We may estimate
	\begin{align}
		\label{IT.J2.Minkowski}
		\norm[\dot{Z}^{-\frac12}]{\sum_{N_2} \sum_{N_1 \sim N_2} I_T(u_{N_1},v_{N_2}) }
			\lesssim \sum_{N_2} \sum_{N_1 \sim N_2} \norm[\dot{Z}^{-\frac12}]{ I_T(u_{N_1},v_{N_2})}
	\end{align}
	and then estimate
	\begin{align}
		\label{IT.J2.Details}
		\norm[\dot{Z}^{-\frac12}]{I_T(u_{N_1},v_{N_2})}^2
			&=	\sum_{N_3 \lesssim N_2} 
					N_3^{-1} \norm[U^2_S]{P_{N_3} I_T(u_{N_1},v_{N_2})}^2\\
			&\lesssim   \sum_{N_3 \lesssim N_2} 
						N_3 \sup_{\substack{\diff_t w \in C_0^\infty \\\norm[V^2_S]{ w} \leq 1}}	
							\left| \int \langle  u_{N_1} v_{N_3}, P_{N_3} w\rangle \, dt \right|^2
			\nonumber \\
			&\lesssim N_1^{-1} \norm[V^2_S]{u_{N_1}}^2 N_2^{-1} \norm[V^2_S]{v_{N_2}}^2
			\nonumber 
	\end{align}
	where in the last step we used \eqref{pre-bilinear-2}. Using \eqref{IT.J2.Details} in \eqref{IT.J2.Minkowski} we conclude that
	\begin{equation}
		\label{IT.J2.Final}
		\norm[\dot{Z}^{-\frac12}]{\sum_{N_2} \sum_{N_1 \sim N_2} I_T(u_{N_1},v_{N_2})}
		\lesssim \norm[\dot{Y}^{-\frac12}]{u}^2 \norm[\dot{Y}^{-\frac12}]{v}^2
	\end{equation}
	again with an implied constant independent of $T$. 
	\end{proof}

Now we prove the termwise estimates \eqref{pre-bilinear-1} and \eqref{pre-bilinear-2} used n the proof of Proposition \ref{prop:IT.est}. Denote by $A_N$ the annular region $N/2 \leq |\xi| \leq 2N$. 
\begin{lemma}
	\label{lemma:J1J2}
	Let $N_1$, $N_2$ and $N_3$ be dyadic integers, and suppose
	that $u_{N_1}$, $v_{N_2}$, and $w_{N_3}$ have Fourier supports respectively in $A_{N_1}$, $A_{N_2}$ and $A_{N_3}$. The following estimates hold:
	\begin{enumerate}[(a)]
		\item	If $N_2 \sim N_3$ then
				\begin{multline}
				\label{pre-bilinear-1}
					\left|
						\sum_{N_1 \lesssim N_2} \int_0^T \int_{\R^2}
							u_{N_1} v_{N_2}  w_{N_3}  \,  dx \,  dy \, dt
					\right|	\\
				\lesssim 
				\left( 
					\sum_{N_1 \lesssim N_2} 
						N_1^{-1} \norm[V^2_S]{u_{N_1}}^2
				\right)^{\frac12}
				N_2^{-\frac12} \norm[V^2_S]{v_{N_2}}
				N_3^{-\frac12} \norm[V^2_S]{w_{N_3}}
			\end{multline}
	\item	If $N_1 \sim N_2$ then
\begin{multline}
\label{pre-bilinear-2}
\left(
\sum_{N_3 \lesssim N_2}
	N_3
		\sup_{\substack{\diff_t w \in C_0^\infty \\ \norm[V^2_S]{w_{N_3}}\leq 1}}
			\left|
				 \int_0^T \int_{\R^2}
					u_{N_1} v_{N_2}  w_{N_3}  \,  dx \,  dy \, dt
			\right|^2
\right)^{\frac12}\\
\lesssim 
N_1^{-\frac12}\norm[V^2_S]{u_{N_1}}
N_2^{-\frac12} \norm[V^2_S]{v_{N_2}}
\end{multline}
\end{enumerate}
with implied constants independent of $T$. 
\end{lemma}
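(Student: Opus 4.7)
Proof plan. Both (a) and (b) reduce to a pointwise-in-frequencies trilinear estimate
\[
\left|\int_0^T\!\!\int_{\R^2}u_{N_1}v_{N_2}w_{N_3}\,dx\,dy\,dt\right|
   \lesssim C(N_1,N_2,N_3)\,\norm[V^2_S]{u_{N_1}}\,\norm[V^2_S]{v_{N_2}}\,\norm[V^2_S]{w_{N_3}}.
\]
Granted this, (a) will follow by Cauchy-Schwarz in $N_1$ (splitting $\norm[V^2_S]{u_{N_1}}=N_1^{1/2}\!\cdot\!(N_1^{-1/2}\norm[V^2_S]{u_{N_1}})$ and using $\sum_{N_1\lesssim N_2}N_1\lesssim N_2$), and (b) will follow by taking the supremum over $w_{N_3}$ of unit $V^2_S$-norm and summing in $N_3$ against the weight $N_3$. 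Truncation by $\mathbbm{1}_{[0,T]}$ changes $V^2_S$-norms by at most a universal constant and is suppressed.

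The pointwise estimate combines the modulation decomposition of Section \ref{subsec:module} with the resonance identity \eqref{resonance}, the energy gain \eqref{Q.big1}, and the bilinear Strichartz estimate \eqref{Vp.Bi} (together with the sharper form implicit in \eqref{bs.estimate} for the most delicate case). I choose a modulation threshold $M$ just below the resonance floor $|\nu|\gtrsim N_{\min}\,N_{\max}^4$ (regime $N_{\max}\gtrsim 1$) or $N_{\min}\,N_{\max}^2$ (regime $N_{\max}\lesssim 1$) supplied by \eqref{nu.est}, where $N_{\min}\leq N_{\max}$ denote the smallest and largest of the three frequencies. Decomposing each factor as $Q^S_{<M}+Q^S_{\geq M}$, the product of three low-modulation pieces vanishes identically, since its spacetime Fourier support would force $|p(\zeta_1)+p(\zeta_2)-p(\zeta_1+\zeta_2)|\lesssim M$ in conflict with the lower bound $\gtrsim M$ from \eqref{nu.range}. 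For each of the remaining seven combinations some factor carries $Q^S_{\geq M}$; placing it in $L^2_{t,x,y}$ via \eqref{Q.big1} yields the gain $M^{-1/2}$, and the product of the other two factors is controlled in $L^2_{t,x,y}$ by \eqref{Vp.Bi}, contributing a factor $(N'_{\min}/N'_{\max})^{1/2}$ on the two frequencies of that pair.

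For part (a), with $N_1\ll N_2\sim N_3$, the pointwise constant is $\lesssim M^{-1/2}(N_1/N_2)^{1/2}$, which evaluates to $N_2^{-5/2}$ (regime $N_2\gtrsim 1$, $M\sim N_1N_2^4$) or $N_2^{-3/2}$ (regime $N_2\lesssim 1$, $M\sim N_1N_2^2$), in either case independent of $N_1$. Cauchy-Schwarz in $N_1$ then contributes an additional $N_2^{1/2}\bigl(\sum_{N_1}N_1^{-1}\norm[V^2_S]{u_{N_1}}^2\bigr)^{1/2}$, yielding total decay $N_2^{-2}$ or $N_2^{-1}$ respectively, each dominating the target $N_2^{-1}$ on the right-hand side of \eqref{pre-bilinear-1}. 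For part (b), with $N_1\sim N_2$ and $N_3\lesssim N_2$, the procedure closes easily in the subcases in which the high-modulation factor is $u_{N_1}$ or $v_{N_2}$: the bilinear estimate on the surviving pair supplies $(N_3/N_2)^{1/2}$, which combines with $M^{-1/2}$ to produce a pointwise constant $\sim N_2^{-5/2}$ independent of $N_3$, and the geometric series $\sum_{N_3\lesssim N_2}N_3\lesssim N_2$ then closes the $N_3$-summation with room to spare.

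The main obstacle is the remaining subcase of (b), in which $w_{N_3}$ itself is the high-modulation factor. Then $u_{N_1}v_{N_2}$ lies at comparable high frequency and \eqref{Vp.Bi} provides no frequency improvement; one must instead invoke the sharper bilinear Strichartz $\norm[L^2(\R^3)]{u_{N_1}v_{N_2}}\lesssim N_2^{-1/2}\norm[V^2_S]{u_{N_1}}\norm[V^2_S]{v_{N_2}}$ for $N_1\sim N_2\gtrsim 1$ implicit in the Jacobian analysis of Lemma \ref{lemma:BS-1} (visible in the first case of \eqref{bs.estimate}), and employ a two-threshold variant of the modulation decomposition with distinct thresholds $M_{uv}$ and $M_w$ satisfying $M_{uv}+M_w\lesssim |\nu|$, chosen so as to balance the $u$-high-modulation and $w$-high-modulation contributions and to ensure summability of the tail $N_3\to 0$. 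Modulo this technical refinement — which is the crux of the argument — the rest is routine bookkeeping of dyadic sums in the spirit of \cite{HHK09}.
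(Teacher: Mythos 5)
There is a genuine gap in your treatment of part (a). You assert that for each of the seven nonzero modulation terms, putting the high-modulation factor in $L^2$ gains $M^{-1/2}$ while the surviving pair in bilinear $L^2$ contributes $(N'_{\min}/N'_{\max})^{1/2}$, and conclude that the pointwise constant is $\lesssim M^{-1/2}(N_1/N_2)^{1/2}$, ``independent of $N_1$.'' This fails in the sub-case where the high-modulation projection $Q^S_{\geq M}$ falls on the \emph{low-frequency} factor $u_{N_1}$: the surviving pair is then $(v_{N_2},w_{N_3})$ with $N_2\sim N_3$, so \eqref{Vp.Bi} yields only an $O(1)$ constant and no $(N_1/N_2)^{1/2}$ gain. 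The pointwise constant in that sub-case is $M^{-1/2}$; with your $M\sim N_1 N_2^4$ this is $N_1^{-1/2}N_2^{-2}$, which still depends on $N_1$. Your proposed Cauchy--Schwarz in $N_1$ then gives $\sum_{N_1\lesssim N_2}C(N_1)^2N_1=\sum_{N_1\lesssim N_2}N_2^{-4}$, a divergent sum (and a different choice of $M$ cannot repair this, since the $N_1$-dependence of $M^{-1/2}$ always exactly cancels the weight $N_1$). The paper avoids the problem by \emph{not} estimating per-$N_1$ in this sub-case: in \eqref{J1.Q1} the sum over $N_1$ is carried \emph{inside} the $L^2$-norm of the high-modulation factor and collapsed by Plancherel (the $u_{N_1}$ have disjoint Fourier supports), after which the remaining two factors are estimated in $L^4\cdot L^4$ via linear Strichartz rather than in bilinear $L^2$. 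Your $L^2\times L^2$-bilinear H\"older split does not admit this summation.

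Two further remarks. You choose the modulation threshold at the fifth-order resonance level $|\nu|\sim N_{\min}N_{\max}^4$, whereas the paper deliberately takes the smaller $M=N_1N_2N_3/8$ coming only from the cubic $\alpha$-term, noting explicitly that the fifth-order gain is not needed for this Lemma (it is reserved for the stronger $\dot{Z}^0$ estimate in Lemma \ref{lemma:J1J2.L2}). Your larger $M$ is legitimate but, as above, does not fix the $N_1$-summation. Second, for part (b) you correctly flag the sub-case in which $w_{N_3}$ carries the high modulation as delicate, but the ``two-threshold variant'' you gesture at is not carried out; the paper itself defers (b) to \cite[Proposition 3.1]{HHK09}, so at minimum you should make that mechanism explicit rather than leaving what you call ``the crux of the argument'' open.
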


\begin{proof}
	The proof of these estimates is identical to the proof of \cite[Proposition 3.1]{HHK09} but we sketch the proof since we will use the same strategy again later to prove stronger estimates on the bilinear form. To bound integrals of the form
	$$ I(u_1,v_2,w_3) = \int_0^T \langle u_1 v_2, w_3 \rangle_{L^2} \, dt $$
	we first let $\widetilde{u_1} = \mathbbm{1}_{[0,T)}u_1$, $\widetilde{v_2} = \mathbbm{1}_{[0,T)} v_2$, $\widetilde{w_3} = \mathbbm{1}_{[0,T)} w_3$ and then introduce the low-modulation/high modulation decomposition $I = Q^S_{<M} + Q_{\geq M}$ (see \eqref{QS.small}--\eqref{QS.large}) to decompose $I(u_1,v_2,w_3)$ into eight terms of the form 
	$$ \int_{\R} \langle Q_1^S \widetilde{u_1} Q_2^S \widetilde{v_2} , Q_3^S \widetilde{w_3} \rangle_{L^2} \, dt $$
	where $Q_i^S$ is either $Q^S_{<M}$ or $Q^S_{\geq M}$. By Fourier analysis (in the three variables $(t,x,y)$ with Fourier variables $(\tau,\xi,\eta)$) we have
	$$		
		\int_{\R} \langle Q_1^S \widetilde{u_1} Q_2^S \widetilde{v_2} , Q_3^S \widetilde{w_3} \rangle_{L^2} \, dt = \\
			\left(\widehat{Q_1^S \widetilde{u_1}} *
					\widehat{Q_2^S \widetilde{v_2} } *
					\widehat{Q_3^S \widetilde{w_3}}
			\right)(0)
	$$
	where $*$ denotes convolution in the variables $(\tau,\xi,\eta)$. Setting $\mu_i = (\xi_i,\eta_i,\tau_i)$ for $i=1,2,3$ the right-hand side may be written
	$$ \int_{\mu_1+\mu_2+\mu_3=0} 
			\widehat{Q_1^S \widetilde{u_1}}(\mu_1)
			\widehat{Q_2^S \widetilde{v_2} }(\mu_2)
			\widehat{Q_3^S \widetilde{w_3}}(\mu_3)
		\, d\mu_1 \, d\mu_2 \, d\mu_3
 	$$
	With a good choice of $M$ based on the Fourier localizations of $u_1$, $v_2$, and $w_3$, we can show that the term with all low modulation factors vanishes, and then use \eqref{Q.big1} on each of the remaining seven terms to obtain estimates that exploit the high modulation.
	
	We begin with the resonance identity (\eqref{resonance} with some changes of variable). For $i=1,2,3$ let
	$$\lambda_i = \tau_i + \beta \xi_i^5 - \alpha \xi_i^3 + \xi_i^{-1}\eta_i^2$$
	where $\tau_i, \xi_i, \eta_i$ are Fourier variables corresponding to $(t,x,y)$. If $\mu_1+\mu_2+\mu_3=0$ and $\beta=-1$ we then have
	\begin{equation}
		\label{KPII5.resonance}
		\lambda_1 + \lambda_2 + \lambda_3 = 
				- 3\alpha \xi_1\xi_2 \xi_3 + 
					5\beta(\xi_1\xi_2\xi_3)(\xi_1^2 + \xi_1 \xi_2 + \xi_2^2) - 
					\frac{(\xi_2\eta_1 - \eta_2 \xi_1)^2}{\xi_1 \xi_2 \xi_3}
	\end{equation}	
	where each of the three right-hand terms has the same sign provided that $\beta <0$ and $\alpha>0$. If $u_1,v_2,w_3$ have Fourier localizations with $|\xi_i| \geq N_i/2$ for $i=1,2,3$, then we have
	\begin{equation}
		\label{lambda.M}
		 |\lambda_1 + \lambda_2 + \lambda_3| \geq 3N_1N_2N_3/8
	\end{equation}	
	(notice that this lower bound makes no use of the summand in \eqref{KPII5.resonance} coming from the fifth-order differential operator). Hence, choosing 
	\begin{equation}
	\label{M.choice1}
		M = N_1 N_2 N_3/8
	\end{equation}
 we conclude that the low modulation term
	$$ \int_\R \langle Q^S_{<M}\widetilde{u_1} Q^S_{<M}\widetilde{v_2}, Q_{<M}^S \widetilde{w_3} \rangle \, dt = 0
	$$
	leaving only integrals with at least one high-modulation factor to be estimated.
	
	We now sketch the proofs of estimates \eqref{pre-bilinear-1} and \eqref{pre-bilinear-2}, referring the reader to the proof of Proposition 3.1 in \cite{HHK09} for further details. We need to estimate expressions of the form
	$$ \left| 
			\sum_{N_1 \lesssim N_2} 
				\int_\R \langle 
							Q_1^S \widetilde{u_{N_1}} 
							Q_2^S \widetilde{v_{N_2}}, 
							Q_3^S \widetilde{w_{N_3}} 
						\rangle 
					\, dt \right| $$
	where at least one $Q_i^S$ is a high-modulation projection $Q^S_{\geq M}$. In the case $i=1$ we may estimate
	\begin{align}
	\label{J1.Q1}
		\Biggl| 
			\sum_{N_1 \lesssim N_2} &
				\int_R \langle Q^S_{\geq M} \widetilde{u_{N_1}}
								Q_2^S \widetilde{v_{N_2}}
								Q_3^S \widetilde{w_{N_3}}
						\rangle \, dt
		\Biggr|	\\
			&\lesssim 
			\norm[L^2]{\sum_{N_1 \lesssim N_2} Q^S_{\geq M} \widetilde{u_{N_1}}}
			\norm[L^4]{Q_2^S \widetilde{v_{N_2}}}
			\norm[L^4]{Q_3^S \widetilde{w_{N_3}}}
			\nonumber\\
			&\lesssim 
			\left( \sum_{N_1 \lesssim N_2}(N_1N_2N_3)^{-1} \norm[V^2_S]{\widetilde{u_{N_1}}}^2\right)^\frac12
			\norm[V^2_S]{\widetilde{v_{N_2}}}
			\norm[V^2_S]{\widetilde{w_{N_3}}}
			\nonumber \\
			&\lesssim
			\left(\sum_{N_1 \lesssim N_2} N_1^{-1} \norm[V^2_S]{u_{N_1}}^2\right)^\frac12
			N_2^{-\frac12} \norm[V^2_S]{v_{N_2}} N_3^{-\frac12} \norm[V^2_S]{w_{N_3}}
			\nonumber
	\end{align}
	where we used \eqref{Q.big1} and \eqref{VP.S1} in the third step.
	
	In the case $i=2$ we can use the Strichartz estimate \eqref{Vp.Bi} on the first and third factors, together with \eqref{Q.big1} on the second factor, to obtain
	\begin{align}
		\label{J1.Q2}
		\Biggl| 
				\int_R &\langle Q_1^S \widetilde{u_{N_1}}
								Q_{\geq M}^S \widetilde{v_{N_2}}
								Q_3^S \widetilde{w_{N_3}}
						\rangle \, dt
		\Biggr|	\\
			&\lesssim \norm[L^2]{Q_{\geq M}^S \widetilde{v_{N_2}}}
						\left( \frac{N_1}{N_3} \right)^\frac14 
							\norm[V^2_S]{\widetilde{v_{N_1}}}
							\norm[V^2_S]{\widetilde{w_{N_3}}}
			\nonumber \\
			&\lesssim 
			(N_1 N_2 N_3)^{-\frac12} \norm[V^2_S]{\widetilde{v_{N_2}}}
				\left( \frac{N_1}{N_3} \right)^\frac14 
				\norm[V^2_S]{\widetilde{u_{N_1}}}
							\norm[V^2_S]{\widetilde{w_{N_3}}}
			\nonumber 
	\end{align}
	which can then be summed over $N_1 \lesssim N_2$ using the Schwartz inequality and the fact that $N_2 \sim N_3$. 
	
	In the case $i=3$, we use the bilinear Strichartz inequality on the $u_{N_1}$ and $v_{N_2}$ terms and use \eqref{Q.big1} on the $w_{N_3}$ factor.
	
	The estimates \eqref{J1.Q1}, \eqref{J1.Q2}, and the analogous $i=3$ estimate together prove \eqref{pre-bilinear-1}. 
	
	The proof of \eqref{pre-bilinear-2} is analogous.

	\end{proof}

\subsection{$L^2$ Estimate}

Next, we prove a stronger estimate on the form $I_T(u,v)$ which will help us prove persistence of regularity in $L^2$. Here is the first time that we make use of the stronger smoothing properties of the fifth-order linear term.

\begin{proposition}
	\label{prop:IT.L2}
	There is a $C>0$ independent of $T$ so that, for all $u,v \in \dot{Y}^{-\frac12} \cap C(\R,H^{1,1}(\R^2))$, the estimate
	\begin{equation}
		\label{IT.L2}
		\norm[\dot{Z}^0]{I_T(u,v)} \lesssim \norm[\dot{Y}^{-\frac12}]{u} \norm[\dot{Y}^{-\frac12}]{v}
	\end{equation}
	holds.
\end{proposition}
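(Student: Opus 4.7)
My plan is to run the proof of Proposition \ref{prop:IT.est} essentially unchanged, but to exploit the fifth-order contribution to the resonance identity \eqref{KPII5.resonance} so as to obtain extra smoothing whenever the output frequency $N_3$ satisfies $N_3 \gtrsim 1$. First I split
$$\norm[\dot{Z}^0]{I_T(u,v)}^2 = \sum_{N_3 \lesssim 1} \norm[U^2_S]{P_{N_3} I_T(u,v)}^2 + \sum_{N_3 \gtrsim 1} \norm[U^2_S]{P_{N_3} I_T(u,v)}^2.$$
On the low-frequency range, $N_3^0 \leq N_3^{-1}$ yields
$$\sum_{N_3 \lesssim 1} \norm[U^2_S]{P_{N_3} I_T(u,v)}^2 \leq \norm[\dot{Z}^{-\frac12}]{I_T(u,v)}^2,$$
and the desired bound follows immediately from Proposition \ref{prop:IT.est}.

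On the high-frequency range I further decompose $I_T(u,v) = \sum_{N_1,N_2} I_T(u_{N_1},v_{N_2})$ and handle the low-high ($N_1 \ll N_2$, whence $N_3 \sim N_2$) and high-high ($N_1 \sim N_2$, whence $N_3 \lesssim N_2$) interactions separately, as in Proposition \ref{prop:IT.est}. In both cases, $N_3 \gtrsim 1$ forces $N_2 \gtrsim 1$, so on the relevant Fourier supports one has $|\xi_1^2 + \xi_1\xi_2 + \xi_2^2| \sim N_2^2$. The same-sign property noted after \eqref{KPII5.resonance} then gives the improved resonance lower bound
$$|\lambda_1+\lambda_2+\lambda_3| \; \gtrsim \; |5\beta| \, |\xi_1\xi_2\xi_3| \, |\xi_1^2+\xi_1\xi_2+\xi_2^2| \; \gtrsim \; N_1 N_2^3 N_3,$$
which allows me to enlarge the modulation threshold in the decomposition used in the proof of Lemma \ref{lemma:J1J2} from $M \sim N_1 N_2 N_3$ to $M \sim N_1 N_2^3 N_3$. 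The pure low-modulation triple integral continues to vanish for this larger $M$, and each of the seven remaining high-modulation terms picks up an extra factor of $N_2^{-1}$ via the bound $\norm[L^2]{Q^S_{\geq M} v} \lesssim M^{-\frac12} \norm[V^2_S]{v}$ from \eqref{Q.big1}.

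Carrying this gain through the chain of estimates used in Lemma \ref{lemma:J1J2} and Proposition \ref{prop:IT.est} produces the improved termwise bounds
$$\norm[U^2_S]{P_{N_3} \sum_{N_1 \ll N_2} I_T(u_{N_1},v_{N_2})} \lesssim N_2^{-1} \Bigl( \sum_{N_1 \ll N_2} N_1^{-1}\norm[V^2_S]{u_{N_1}}^2 \Bigr)^{\frac12} \norm[V^2_S]{v_{N_2}}$$
in the low-high case and
$$\norm[\dot{Z}^0]{I_T(u_{N_1},v_{N_2})} \lesssim N_1^{-\frac12} N_2^{-1} \norm[V^2_S]{u_{N_1}} \norm[V^2_S]{v_{N_2}}$$
in the high-high case. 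Squaring the low-high bound and summing over $N_3 \gtrsim 1$ with $N_3 \sim N_2 \gtrsim 1$ (using $N_2^{-2} \leq N_2^{-1}$) recovers $\norm[\dot{Y}^{-\frac12}]{v}^2$; summing the high-high bound by Minkowski and Cauchy-Schwarz over $N_1 \sim N_2 \gtrsim 1$, as in \eqref{IT.J2.Minkowski}, uses the extra $N_2^{-\frac12}$ (compared with the analogous bound in Proposition \ref{prop:IT.est}) to force convergence of the $N_2$ sum.

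The main obstacle will be to verify that the $N_2^{-1}$ gain is captured uniformly across the seven high-modulation subcases of the modulation decomposition, especially the case in which $Q^S_{\geq M}$ is placed on the dual test function and the bilinear Strichartz estimate \eqref{Vp.Bi} is applied to the pair $(u_{N_1}, v_{N_2})$; the logarithmic factor $(1 + \log(N_2/N_1))^2$ there is harmless after Cauchy-Schwarz but must be tracked carefully. A secondary technical point is ensuring that the enlarged $M$ is compatible with the transfer of $\partial_x$ to the dual test function in the duality formulation \eqref{Ups.norm.dual}.
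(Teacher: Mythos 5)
Your approach is correct in outline, and it does establish the stated estimate, but it takes a somewhat different route from the paper's. You exploit the quintic summand $5\beta\,\xi_1\xi_2\xi_3(\xi_1^2+\xi_1\xi_2+\xi_2^2)$ of \eqref{KPII5.resonance} directly, using $\xi_1^2+\xi_1\xi_2+\xi_2^2 \gtrsim \max(\xi_1^2,\xi_2^2)\sim N_2^2$ to raise the modulation threshold to $M\sim N_1N_2^3N_3$; since this ``gain'' becomes a loss when $N_2\lesssim 1$, you must first split off the range $N_3\lesssim 1$ and dispose of it via Proposition~\ref{prop:IT.est}. The paper instead avoids any frequency split: in Lemma~\ref{lemma:J1J2.L2} it uses \emph{two} permuted versions of the resonance identity, \eqref{KPII5.resonance.1} and \eqref{KPII5.resonance.2}, to produce two lower bounds $M_1\sim N_1N_2N_3$ and $M_2\sim N_1N_2^2N_3^2$ on $\max_i|\lambda_i|$, and then chooses the \emph{geometric mean} $M=M_1^{1/2}M_2^{1/2}\sim N_1N_2^{3/2}N_3^{3/2}$. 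This gives a uniform gain of $(N_2N_3)^{-1/4}\lesssim N_3^{-1/2}$ over the $\dot Z^{-\frac12}$ proof, which is precisely what is needed to exchange the $N_3^{-1}$ weight for $N_3^0$ across all dyadic $N_3$ at once, without a low-frequency/high-frequency dichotomy. Your $M$ is actually larger than the paper's in the high-frequency regime (and gives more room in the high-high sum), so the arithmetic you describe does close; the trade-off is the extra case split and, as you yourself flag, the need to track the $N_2^{-1}$ gain and the logarithmic loss from \eqref{Vp.Bi} uniformly over all seven high-modulation subcases, which is exactly the bookkeeping the paper packages into the termwise estimates \eqref{pre-bilinear-L2-1}--\eqref{pre-bilinear-L2-2}.
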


\begin{proof}
	The proof follows the pattern of the proof of Proposition \ref{prop:IT.L2} but uses the sharper estimates contained in Lemma \ref{lemma:J1J2.L2} below. We only sketch the main ideas. As before, by bilinearity and symmetry it suffices to show that
	\begin{align}
		\label{IT.J1.L2}
		\norm[\dot{Z}^0]{\sum_{N_2} \sum_{N_1 \lesssim N_2} I_T(u,v)} & \lesssim \norm[\dot{Y}^{-\frac12}]{u} \norm[\dot{Y}^{-\frac12}]{v}
		\intertext{and}
		\label{IT.J2.L2}
		\norm[\dot{Z}^0]{\sum_{N_2} \sum_{N_1 \sim N_2} I_T(u,v)} & \lesssim \norm[\dot{Y}^{-\frac12}]{u} \norm[\dot{Y}^{-\frac12}]{v}
	\end{align}
	with implied constants independent of time. We repeat the proof of Proposition \ref{prop:IT.est}, using the estimate \eqref{pre-bilinear-L2-1} to bound \eqref{IT.J1.L2} and using \eqref{pre-bilinear-L2-2} to bound \eqref{IT.J2.L2}.
\end{proof}

The following estimates, analogous to those of Lemma \ref{lemma:J1J2}, play a key role in the proof of Proposition \ref{prop:IT.L2}. Here we make use of the smoothing properties of the fifth-order term in \eqref{KPII-5}.

\begin{lemma}
	\label{lemma:J1J2.L2}
	Let $u_{N_1}, v_{N_2}, w_{N_3} \in V^2_{-,S}$ so that $\supp \, u_{N_1} \in A_{N_1}$, 
	$\supp \, v_{N2} \in A_{N_2}$, and $\supp \, w_{N_3} \in A_{N_3}$ for dyadic integers $N_1$, $N_2$, $N_3$. For any $T>0$:
	\begin{enumerate}
		\item If $N_2 \sim N_3$ then
				\begin{multline}
					\label{pre-bilinear-L2-1}
					\left| \sum_{N_1 \lesssim N_2} \int_0^T \langle u_{N_1} v_{N_2}, \diff_x w_{N_3} \rangle_{L^2} \, dt  \right| \\
						\lesssim 	
							\left( 
								\sum_{N_1 \lesssim N_2} N_1^{-1} \norm[V^2_S]{u_{N_1}}^2 
							\right)^\frac12 
							N_2^{-\frac12} \norm[V^2_S]{v_{N_2} }
							\norm[V^2_S]{w_{N_3}}
				\end{multline}

		\item If $N_1 \sim N_2$ then
				\begin{multline}
					\label{pre-bilinear-L2-2}
					\sum_{N_3 \lesssim N_2}	
						\sup_{\norm[V^2_S]{w} \leq 1}
							\left| \int_0^T \langle u_{N_1} v_{N_2}, \diff_x w_{N_3} \rangle \, dt\right| \\
						\lesssim 
							N_1^{-\frac12} \norm[V^2_S]{u_{N_1}} N_2^{-\frac12} \norm[V^2_S]{v_{N_2}}
				\end{multline}

	\end{enumerate}
\end{lemma}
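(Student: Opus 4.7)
The plan is to follow the proof of Lemma \ref{lemma:J1J2} verbatim, but with a different choice of the modulation parameter $M$ that exploits the fifth-order contribution to the resonance identity \eqref{KPII5.resonance}. Specifically, I decompose each of the three factors as $1 = Q^S_{<M} + Q^S_{\geq M}$, kill the all-low-modulation term using \eqref{KPII5.resonance}, and bound the seven remaining terms with \eqref{Q.big1}, the linear $L^4$ Strichartz \eqref{VP.S1}, and the bilinear Strichartz \eqref{Vp.Bi}, following the pattern of \eqref{J1.Q1}--\eqref{J1.Q2}.

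Since $\alpha > 0$ and $\beta = -1$ force all three summands on the right-hand side of \eqref{KPII5.resonance} to share a sign, the fifth-order summand alone gives
$$ |\lambda_1 + \lambda_2 + \lambda_3| \geq 5\,|\xi_1 \xi_2 \xi_3|\,(\xi_1^2 + \xi_1\xi_2 + \xi_2^2). $$
In case (1), where $N_2 \sim N_3 \gtrsim N_1$, this lower bound is $\gtrsim N_1 N_2^4$ as soon as $N_2 \gtrsim 1$; in case (2), where $N_1 \sim N_2 \gtrsim N_3$, it is $\gtrsim N_2^4 N_3$ once $N_2 \gtrsim 1$. Accordingly I would take $M \sim N_1 N_2^4$ in case (1) and $M \sim N_2^4 N_3$ in case (2) for the regime $N_2 \gtrsim 1$, and fall back on the third-order choice $M \sim N_1 N_2 N_3$ of \eqref{M.choice1} for $N_2 \lesssim 1$. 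In every case the all-low-modulation trilinear integral vanishes as in the proof of Lemma \ref{lemma:J1J2}.

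Each $Q^S_{\geq M}$ factor then contributes $M^{-1/2}$ via \eqref{Q.big1}, so in the regime $N_2 \gtrsim 1$ the enlarged $M$ produces an extra gain of $N_2^{-1}$ compared with the proof of Lemma \ref{lemma:J1J2}. This gain precisely absorbs the derivative cost $N_3 \sim N_2$ coming from $\partial_x w_{N_3}$, and the same linear and bilinear Strichartz estimates used in \eqref{J1.Q1}--\eqref{J1.Q2} then deliver \eqref{pre-bilinear-L2-1} and \eqref{pre-bilinear-L2-2} directly (i.e.\ with the same structure as \eqref{pre-bilinear-1}--\eqref{pre-bilinear-2} but without the $N_3^{-1/2}$ weight). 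In the low-frequency regime $N_2 \lesssim 1$, the derivative cost $N_3 \sim N_2 \lesssim 1$ is itself harmless, so applying \eqref{pre-bilinear-1} or \eqref{pre-bilinear-2} to $\partial_x w_{N_3}$ already yields the claimed bound. The main obstacle I expect is the careful bookkeeping across the seven high-modulation configurations: in each sub-case one has to decide whether to pair two factors via bilinear Strichartz \eqref{Vp.Bi} or to spread H\"older as $L^2 \cdot L^4 \cdot L^4$, and then verify that the $N_1$-summation (in (1)) or $N_3$-summation (in (2)) converges. The extra $N_2^{-1}$ gain leaves enough room for Cauchy--Schwarz to produce the weighted $\ell^2$ norm $(\sum N_1^{-1}\|u_{N_1}\|_{V^2_S}^2)^{1/2}$, and the logarithmic weight in \eqref{Vp.Bi} is absorbed by the geometric nature of the dyadic sums.
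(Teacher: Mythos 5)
Your proposal is essentially correct and follows the same strategy as the paper: exploit the fifth-order summand of the resonance identity \eqref{KPII5.resonance} to enlarge the modulation cutoff $M$, so that the extra $M^{-1/2}$ gain from \eqref{Q.big1} absorbs the derivative cost $N_3$ relative to Lemma \ref{lemma:J1J2}, and then repeat the seven-term high-modulation decomposition verbatim.

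The one place you diverge from the paper is the precise choice of $M$. You take $M \sim N_1 N_2^2 N_3^2$ for $N_2 \gtrsim 1$ (and implicitly use the identity $\xi_1^2+\xi_1\xi_2+\xi_2^2 = \tfrac12(\xi_1^2+\xi_2^2+\xi_3^2) \gtrsim \max(N_i)^2$ to get the sharpened exponents $N_1 N_2^4$ in case (1) and $N_2^4 N_3$ in case (2)), and you fall back to the third-order $M \sim N_1 N_2 N_3$ when $N_2 \lesssim 1$. The paper instead sets $M_1 = N_1 N_2 N_3/8$ and $M_2 = \tfrac{5}{3\cdot 2^5}N_1 N_2^2 N_3^2$ and takes the geometric mean $M = (M_1 M_2)^{1/2} \sim N_1 N_2^{3/2} N_3^{3/2}$. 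Both are valid lower bounds for $\max|\lambda_i|$, so both kill the all-low-modulation term; your $M$ is larger (hence gives a stronger $M^{-1/2}$ gain) in the regime $N_2 N_3 \gtrsim 1$, while the geometric mean is chosen precisely so that the derivative cost is absorbed \emph{exactly} uniformly in frequency — e.g.\ in case (1), $N_3 M^{-1/2} \sim N_1^{-1/2} N_2^{-1/2}$ on the nose — which lets the paper avoid the $N_2 \gtrsim 1$ versus $N_2 \lesssim 1$ case split you introduce. Your case split is correct but adds bookkeeping. One small imprecision: the lower bound $|\lambda_1+\lambda_2+\lambda_3| \gtrsim N_1 N_2^4$ holds for all $N_2$, not ``as soon as $N_2 \gtrsim 1$''; the threshold $N_2 \gtrsim 1$ is only relevant for deciding which of the two lower bounds (cubic or quintic) is stronger, and hence which $M$ to choose. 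With that clarified, your outline produces the estimates \eqref{pre-bilinear-L2-1}--\eqref{pre-bilinear-L2-2} as claimed.
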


\begin{proof}
	We repeat the strategy of proof explained in the proof of Lemma \ref{lemma:J1J2} but use the resonance relation \eqref{KPII5.resonance} in a different way, with a different choice of $M$ in the decomposition. Considering integrals of the form
	\begin{equation}
		\label{int.3Q}
			\int_\R \langle (Q_1^S \widetilde{u_{N_1}}) (Q_2^S\widetilde{v_{N_2}}), (Q_3^S\widetilde{w_{N_3}}) \rangle_{L^2} \, dt 
	\end{equation} 
	we exploit the \emph{two} resonance relations
	\begin{align}
		\label{KPII5.resonance.1}
		\lambda_1 + \lambda_2 + \lambda_3 
				& =- 3\alpha \xi_1\xi_2 \xi_3 
					+ 
					5\beta(\xi_1\xi_2\xi_3)(\xi_1^2 + \xi_1 \xi_2 + \xi_2^2)\\
				 &\quad - \frac{(\xi_2\eta_1 -  \eta_2 \xi_1)^2}{\xi_1 \xi_2 \xi_3}
		\nonumber\\
		\label{KPII5.resonance.2}
		\lambda_1 + \lambda_2 + \lambda_3 
			&= 
				- 3\alpha \xi_1\xi_2 \xi_3 + 
					5\beta(\xi_1\xi_2\xi_3)(\xi_2^2 + \xi_2 \xi_3 + \xi_3^2)\\
			&\quad  - 
					\frac{(\xi_2\eta_3 -  \eta_2 \xi_3)^2}{\xi_1 \xi_2 \xi_3}
		\nonumber 
	\end{align}
	(note that \eqref{KPII5.resonance.2} is obtained from \eqref{KPII5.resonance.1} by permutation of indices). Suppose that $\beta=-1$ and $|\xi_i| \geq N_i/2$, $i=1,2,3$. From \eqref{KPII5.resonance.1} we deduce that
	\begin{equation}
		\label{KPII5.Mbd.1}
			\frac{1}{8} N_1N_2N_3 \leq |\xi_1| \, |\xi_2| \, |\xi_3| \leq \max(|\lambda_1|,|\lambda_2|,|\lambda_3|)
	\end{equation}
	while from \eqref{KPII5.resonance.2} with $\beta=-1$ we obtain
	\begin{align}
		\label{KPII5.Mbd.2}
			\frac{5}{3(2^5)}N_1N_2^2N_3^2 
				&	\leq \frac53 |\xi_1| \, |\xi_2|^2 |\xi_3|^2 \\
				& 	\leq \frac53 |\xi_1 \xi_2 \xi_3 (\xi_2^2 + \xi_2 \xi_3 + \xi_3^2)| 
				\nonumber \\
				&	\leq  \max( |\lambda_1|,|\lambda_2|,|\lambda_3| )
				\nonumber 
	\end{align}
		
	Let $M_1 = N_1N_2N_3/8$ and $M_2 = \frac{5}{3(2^5)}N_1 N_2^2 N_3^2$. If we now take 
	$$M = M_1^\frac12 M_2^\frac12 = \frac{1}{16}\sqrt{\frac53}N_1 N_2^\frac32 N_3^\frac32$$
	we conclude that
	\begin{equation}
		\label{KPII5.Mbd.3}
		M \leq \max(|\lambda_1|, |\lambda_2|, |\lambda_3|)
	\end{equation}
	so that, with this choice of $M$, the integral \eqref{int.3Q} is zero if $Q_i^S = Q_{<M}^S$ for $i=1,2,3$. This means that, to prove the estimates \eqref{pre-bilinear-L2-1} and \eqref{pre-bilinear-L2-2}, we can use the same strategy as before but with the new choice of $M$. We then repeat the estimates used before with the new choice of $M$, and obtain bounds adapted to bounding $I_T(u,v)$ in the  $\dot{Z}^0$ norm. Complete details are given in Chapter 6, Lemmas 6.0.5 and 6.0.7 of \cite{CS2023}.

\end{proof}

\subsection{Estimates on $I_\infty$}

Finally, we prove boundedness of the bilinear map $I_\infty$ and convergence of $I_T$ to $I_\infty$. The following Proposition and its proof closely follow the statement and proof of \cite[Corollary 3.4]{HHK09}.
 
\begin{proposition}
	\label{prop:Iinfty}
	The map $I_\infty$ extends to a bounded linear map 
	$$ I_\infty: \dot{Y}^{-\frac12} \times \dot{Y}^{-\frac12} \to \dot{Z}^{-\frac12} $$
	with the property that for all $u,v \in \dot{Y}^{-\frac12}$, 
	\begin{equation}
		\lim_{T \to \infty} \norm[\dot{Z}^{-\frac12}]{I_T(u,v) - I_\infty(u,v)} = 0.
	\end{equation}
	Finally, for any $u \in \dot{Y}^{-\frac12}$, $\varphi^+ = \lim_{t \to \infty} S(-t) I_\infty(u,u)$ exists in $\dot{H}^{-\frac12,0}(\R^2)$.
\end{proposition}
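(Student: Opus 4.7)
The plan follows the template of Corollary 3.4 in \cite{HHK09}, splitting the proposition into three claims: (i) boundedness of $I_\infty$ on $\dot{Y}^{-\frac12} \times \dot{Y}^{-\frac12}$; (ii) $I_T(u,v) \to I_\infty(u,v)$ in $\dot{Z}^{-\frac12}$; and (iii) existence of the scattering limit. The uniform-in-$T$ bound of Proposition \ref{prop:IT.est} drives (i), and (iii) is an immediate consequence of (i) via Lemma \ref{lemma.Y.scat}.

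For (i), I would first define $I_\infty(u,v)(t)$ pointwise: since the cutoff $\mathbbm{1}_{[0,T]}(s)$ in \eqref{KPII-5-IT-bis} equals $1$ on $[0,t]$ whenever $T \geq t$, the value $I_T(u,v)(t)$ is independent of $T$ for $T \geq t$, and I set $I_\infty(u,v)(t)$ to this common value, which agrees with \eqref{KPII-5-IT-infty}. To bound $\|I_\infty(u,v)\|_{\dot{Z}^{-\frac12}}$, I use the duality representation \eqref{Ups.norm.dual}: a test function $\varphi$ with $\partial_t \varphi \in C_0^\infty(\R^3)$ is constant outside a bounded set in $t$, so the pairing $\int \langle \partial_t \varphi, S(-t) P_N I_\infty(u,v) \rangle_{L^2} \, dt$ coincides with the corresponding pairing against $P_N I_T(u,v)$ once $T$ exceeds the support of $\partial_t \varphi$. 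Proposition \ref{prop:IT.est} then gives $\|P_N I_\infty(u,v)\|_{U^2_S} \leq \liminf_{T \to \infty} \|P_N I_T(u,v)\|_{U^2_S}$, and Fatou's lemma in the dyadic sum yields $\|I_\infty(u,v)\|_{\dot{Z}^{-\frac12}} \lesssim \|u\|_{\dot{Y}^{-\frac12}} \|v\|_{\dot{Y}^{-\frac12}}$.

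Claim (ii) is the main obstacle. The difference $D_T := I_\infty(u,v) - I_T(u,v)$ vanishes on $\{t \leq T\}$ and equals $\tfrac12 \int_T^t S(t-s)(uv)_x \, ds$ for $t \geq T$. A naive time-translation argument expresses $\|D_T\|_{\dot{Z}^{-\frac12}}$ in terms of the $\dot{Z}^{-\frac12}$ norm of $I_\infty$ applied to the shifted functions $u(\cdot + T), v(\cdot + T)$, but translation invariance of the $\dot{Y}^{-\frac12}$ norm gives $\|u(\cdot + T)\|_{\dot{Y}^{-\frac12}} = \|u\|_{\dot{Y}^{-\frac12}}$, so this argument recovers only the uniform bound from (i) with no decay---the scattering asymptote $u_+$ is carried along by the time shift. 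To get decay, I would decompose $u = u_c + S(t) u_+$ where $u_+$ is the scattering asymptote and $u_c$ is compactly supported in time, and argue that such decompositions are dense in $\dot{Y}^{-\frac12}$ (the free-evolution piece accounts exactly for the $\|u_+\|_{\dot{H}^{-\frac12,0}}$ contribution to the norm, while the residue lies in the closure of compactly-supported functions). For any factor that is compactly supported in time, $I_T$ equals $I_\infty$ identically once $T$ exceeds the support, so convergence is trivial. The delicate remaining piece is $I_\infty(S(\cdot) u_+, S(\cdot) v_+)$, whose tail $\tfrac12 \int_T^\infty S(t-s)((S(s)u_+)(S(s)v_+))_x \, ds$ I would control via the bilinear Strichartz estimate \eqref{BS-1} combined with the sharper $L^2$ bound of Proposition \ref{prop:IT.L2} (which exploits the fifth-order smoothing); this shows both that the asymptote $\int_0^\infty S(-s)((S(s)u_+)(S(s)v_+))_x \, ds$ converges in $\dot{H}^{-\frac12,0}$ and that the tail's contribution to $\|D_T\|_{\dot{Z}^{-\frac12}}$ vanishes as $T \to \infty$. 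Together with the uniform bound this extends convergence from the dense subspace to all of $\dot{Y}^{-\frac12}$.

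Finally, claim (iii) is immediate: since $I_\infty(u,u) \in \dot{Z}^{-\frac12} \hookrightarrow \dot{Y}^{-\frac12}$, Lemma \ref{lemma.Y.scat} yields the existence of $\varphi^+ = \lim_{t \to \infty} S(-t) I_\infty(u,u)$ in $\dot{H}^{-\frac12,0}(\R^2)$.
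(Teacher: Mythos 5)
Your three-part plan matches the paper's at the level of strategy, and your claim (iii) is exactly the paper's argument. But there are two issues, one subsidiary and one central.

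The subsidiary issue concerns claim (i). You propose to bound $\|P_N I_\infty(u,v)\|_{U^2_S}$ by applying the duality formula \eqref{Ups.norm.dual} and a Fatou argument. This establishes finiteness of the $\dot{Z}^{-\frac12}$ \emph{norm}, but $\dot{Z}^{-\frac12}$ is defined as a \emph{closure} of a set of nice functions, so a finite norm is not automatically membership, and applying \eqref{Ups.norm.dual} presupposes membership in $U^2_S$ in the first place. The paper avoids this circularity by first showing $P_N I_\infty(u,v) \in V^2_S$ (via Proposition \ref{prop:V0} and Proposition \ref{prop:IT.est}), concluding $I_\infty(u,v) \in \dot{Y}^{-\frac12}$, and then obtaining $\dot{Z}^{-\frac12}$ membership as a by-product of the Cauchy argument in (ii). Your Fatou argument could be made rigorous along these lines, but as stated it begs the question.

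The central gap is in claim (ii). You correctly diagnose why time translation fails, and the decomposition into a compactly-supported-in-time piece plus a free wave $S(t)u_+$ is morally the paper's decomposition $u = \widetilde{u} + \alpha_0\psi$. However, the ingredient you invoke to kill the free-wave tail---Proposition \ref{prop:IT.L2} plus \eqref{BS-1}---cannot work. Proposition \ref{prop:IT.L2} gives a $T$-uniform \emph{bound} on $\|I_T\|_{\dot{Z}^0}$, not decay in $T$, and the $\dot{Z}^0$ norm does not control the $\dot{Z}^{-\frac12}$ norm of the tail. What is actually needed, and what the paper proves, is a quantitative ``free waves vanish at infinity'' estimate: Lemma \ref{lemma:free.shrink} shows $\|Q^S_* \alpha_T S(t)\varphi\|_{L^4} \lesssim \eps$ for $T$ large (a dominated-convergence argument using \eqref{S1.complete} and the fact that $Q_{<M}$ is a bounded convolution in $t$). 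This smallness is then fed into an interpolated bilinear estimate, Lemma \ref{lemma:BS-1-bis}, which gains a factor $\eps^{\frac12}$ over \eqref{Vp.Bi}. Reworking the proofs of \eqref{IT.J1.Final} and \eqref{IT.J2.Final} with these inputs produces $\|I_{T_1}(\alpha_{T_1}\psi,\alpha_{T_1}\phi)\|_{\dot{Z}^{-\frac12}} \lesssim \eps^{\frac12}$, which is the decay you need. Without identifying this mechanism---decay of the free evolution in $L^4$ after a cutoff, transferred through an $\eps$-improved bilinear estimate---the convergence $I_T \to I_\infty$ does not follow.
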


\begin{remark}
	The existence of $\varphi_+$ implies existence of scattering asymptotes if $u$ is a solution of \eqref{KPII-5-int}. A similar result holds for the limit as $t \to -\infty$. 
\end{remark}

For the proof we will use Lemma \ref{lemma.Y.scat} and the following two lemmas. 
These will be used in the proof that $I_T(u,v) \to I_\infty(u,v)$ in $\dot{Z}^{-\frac12}$ for $u,v \in \dot{Y}^{-\frac12}$. Here and in what follows, we introduce the cutoff function
	\begin{equation}
		\label{alpha.T}
		\alpha_T(t) = 
			\begin{cases}
				0,		&	t < T-1 \\
				t-T+1, 	& T-1 \leq t \leq T,\\
				1		&	t \geq T.
			\end{cases}
	\end{equation}

\begin{lemma}[\cite{CS2023}, Lemma 5.2.3]
	\label{lemma:free.shrink}
	Suppose that $\varphi \in L^2(\R)$, fix $M > 0$, and let $\eps>0$ be given. There exists $T'>0$ depending on $\varphi$ so that for $Q^S_* = Q^S_{< M}$ or $Q^S_* = Q^S_{\geq M}$, 
	\begin{equation}
		\label{free.shrink}
		\norm[L^4]{Q^S_* \alpha_T S(t) \varphi} \lesssim \eps \norm[L^2]{\varphi}
	\end{equation}
	for all $T > T'$.
\end{lemma}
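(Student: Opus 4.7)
The plan is to combine three ingredients: a uniform-in-$T$ bound that permits reduction to Schwartz $\varphi$ by density, an explicit formula that reduces $Q^S_*$ applied to a free solution to a scalar one-dimensional Littlewood-Paley projector acting on $\alpha_T$, and a near/far split of the resulting $L^4$ integral controlled respectively by Strichartz integrability and by rapid decay of the Littlewood-Paley convolution kernel.

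First I would establish the uniform bound $\norm[L^4(\R^3)]{Q^S_*(\alpha_T S(\cdot)\varphi)} \lesssim \norm[L^2]{\varphi}$, independent of $T$, $M$, and the choice $Q^S_* \in \{Q^S_{<M}, Q^S_{\geq M}\}$. Since $S(-t)[\alpha_T S(t)\varphi] = \alpha_T(t)\varphi$ with $\alpha_T$ nondecreasing on $[0,1]$ and vanishing for $t < T-1$, the $L^2$-valued function $t \mapsto \alpha_T(t)\varphi$ lies in $V^2_-$ with norm controlled by $\norm[L^2]{\varphi}$. Combining with \eqref{Q.Vp} and \eqref{VP.S1} (with $p<4$) yields the desired uniform bound. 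Coupled with the density of $\calS(\R^2)$ in $L^2(\R^2)$, this reduces the problem to proving smallness as $T \to \infty$ for each fixed Schwartz $\varphi$.

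For such $\varphi$, the crucial observation is that $S(-t)[\alpha_T S(t)\varphi] = \alpha_T(t)\varphi(x,y)$ factors into a scalar function of $t$ times a function of $(x,y)$, so the $t$-frequency projector $Q_{<M}$ acts solely on $\alpha_T$, yielding the explicit formula
$$Q^S_*[\alpha_T S(\cdot)\varphi](t,x,y) = [Q_*\alpha_T](t) \cdot S(t)\varphi(x,y),$$
where $Q_* \in \{Q_{<M}, Q_{\geq M}\}$ is the corresponding one-dimensional Littlewood-Paley projector. Therefore
$$\norm[L^4_{t,x,y}]{Q^S_*(\alpha_T S(\cdot)\varphi)}^4 = \int_\R |[Q_*\alpha_T](t)|^4 \, \norm[L^4_{x,y}]{S(t)\varphi}^4 \, dt,$$
which I would split at $t = T-2$. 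On $\{t \geq T-2\}$ the scalar factor is uniformly bounded, so the contribution is dominated by the tail $\int_{T-2}^\infty \norm[L^4_{x,y}]{S(t)\varphi}^4 \, dt$, which tends to zero by the Strichartz bound \eqref{S1.complete} ensuring $S(\cdot)\varphi \in L^4(\R^3)$. On $\{t < T-2\}$ the supports of $t$ and $\alpha_T$ are separated by at least $1$, so the rapid decay of the Littlewood-Paley kernel yields $|[Q_*\alpha_T](t)| \lesssim_N (1+M(T-t))^{-N}$ for any $N$. Combined with the pointwise dispersive decay $\norm[L^4_{x,y}]{S(t)\varphi}^4 \lesssim_\varphi (1+|t|)^{-2}$ valid for Schwartz $\varphi$ (a consequence of the $L^1 \to L^\infty$ estimates underlying Lemma \ref{lemma:Strichartz}), the substitution $s = T - t$ and a split $s \in [2, T/2]$ versus $s \in [T/2, \infty)$ show this contribution also vanishes as $T \to \infty$.

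The main obstacle is the estimate on $\{t < T - 2\}$: since this region has infinite Lebesgue measure, pointwise smallness of $|Q_*\alpha_T|$ alone cannot suffice. One must additionally exploit the pointwise dispersive decay of $\norm[L^4_{x,y}]{S(t)\varphi}$ available for Schwartz $\varphi$, which is precisely why density reduction to a class of smooth rapidly-decaying data is essential — an arbitrary $L^2$ datum provides only integrability of $\norm[L^4_{x,y}]{S(t)\varphi}^4$ in $t$, not pointwise decay as $|t| \to \infty$.
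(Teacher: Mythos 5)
Your proof is correct but takes a longer route than the paper's. Both arguments hinge on the same key factorization $Q^S_*[\alpha_T S(\cdot)\varphi] = (Q_*\alpha_T)(t)\cdot S(t)\varphi$, reducing matters to the scalar behavior of $Q_*\alpha_T$. Where you diverge is after this step: the paper observes that $(Q_{<M}\alpha_T)(t)=\int\widecheck{\varphi}(t')\alpha_T(t-t'/M)\,dt'$ is bounded uniformly in $T$ and $t$ and tends to $0$ pointwise in $t$ as $T\to\infty$, so that
$\|Q_{<M}\alpha_T S(\cdot)\varphi\|_{L^4}^4 = \int_\R |(Q_{<M}\alpha_T)(t)|^4\|S(t)\varphi\|_{L^4_{x,y}}^4\,dt \to 0$
by dominated convergence, the dominating function $C\|S(t)\varphi\|_{L^4_{x,y}}^4\in L^1(\R)$ being furnished by the Strichartz estimate \eqref{S1.complete}; the $Q_{\geq M}$ case then follows by subtraction from $\alpha_T S(\cdot)\varphi$, whose $L^4$ norm also vanishes by dominated convergence. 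Your density reduction and near/far split reach the same conclusion, but the ``main obstacle'' paragraph is off the mark: you assert that pointwise dispersive decay, and hence Schwartz data, is essential because the far region has infinite measure, yet the $t$-integrability of $\|S(t)\varphi\|_{L^4_{x,y}}^4$ is precisely the summability needed for dominated convergence to close the argument for arbitrary $\varphi\in L^2$, with no pointwise decay. Even within your own quantitative scheme, $t\in[T/2,T-2]$ can be handled by the vanishing tail $\int_{T/2}^\infty\|S(t)\varphi\|_{L^4_{x,y}}^4\,dt$ and $t<T/2$ by the uniform kernel bound $|(Q_*\alpha_T)(t)|\lesssim_N (1+MT/2)^{-N}$, so both the density step and the pointwise dispersive bound are dispensable. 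What your argument buys is an explicit rate in $T$ (for Schwartz data); what the paper's buys is brevity and uniformity over all of $L^2$.
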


\begin{proof}
It is enough to show that $\norm[L^4]{Q_*^S\alpha_T S(t)\varphi} < \eps$ for given $\eps$ and $T$ sufficiently large. First, as $S(t) \varphi \in L^4(\R^3)$ by \eqref{S1.complete}, it follows by dominated convergence that $\norm[L^4]{\alpha_T S(t) \varphi} \to 0$ as $T \to \infty$, so it remains to show that this remains true when premultiplied by $Q^S_{< M}$. The statement for $Q^S_{\geq M} = I - Q^S_{<M}$ is an immediate consequence. 

By definition, $Q^S_{<M} \alpha_T S(t) \varphi = S(t) (Q_{< M} \alpha_T) \varphi$. Recall that $Q_{<M}$ is a Fourier multiplier with symbol $\varphi(\tau/M)$ 	where $\varphi \in C_0^\infty([-2,2])$ and $\varphi(\tau)=1$ for $|\tau| \leq 1$.  Hence
$$ (Q_{<M} \alpha_T)(t) = \int \widecheck{\varphi}(t') \alpha_T(t - t'/M) \, dt' $$
is a bounded function of $t$ which commutes with $S(t)$ and does not affect the dominated convergence argument. This gives the desired estimate.
\end{proof}

We will also need an improved bilinear estimate analogous to \eqref{BS-1}. As before $Q^S_i$ denotes one of $Q^S_{<M}$ or $Q^S_{\geq M}$.

\begin{lemma}[\cite{CS2023}, Lemma 5.2.4]
	\label{lemma:BS-1-bis}
	Suppose that $\varphi \in \dot{H}^{-\frac12,0}(\R^2)$, $v \in V^2_S$, and $u$ is such that $P_{N_1}u = P_{N_1} \alpha_TS(t) \varphi$. If we choose $T$ so that 
	\begin{equation}
		\label{BS-1-bis-est}
		\norm[L^4(\R^3)]{ Q_1^S P_{N_1} u} \lesssim \eps \norm[L^2(\R^2)]{P_{N_1} \varphi},
	\end{equation}
	then for $N_1 \lesssim N_2$, the estimate
	\begin{multline}
		\label{BS-1-bis}
		\norm[L^2(\R^3)]{(Q_1^S P_{N_1}u) (Q^S_2 P_{N_2}v)}\\
		\lesssim \eps^\frac12 \left( \frac{N_1}{N_2}\right)^{\frac14}	
			\left( 1+ \log\left(\frac{N_2}{N_1}\right) \right)
			\norm[L^2(\R^2)]{P_{N_1}\varphi}
			\norm[V^2_S]{P_{N_2} v}.
	\end{multline}

\end{lemma}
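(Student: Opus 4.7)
The plan is to obtain \eqref{BS-1-bis} by interpolating between two bilinear $L^2$ bounds: the bilinear Strichartz bound \eqref{Vp.Bi}, which carries the $(N_1/N_2)^{1/2}$ gain but no smallness, and a crude H\"older $L^4 \times L^4 \to L^2$ bound that invokes the smallness hypothesis \eqref{BS-1-bis-est} but has no frequency ratio. Taking the geometric mean of the two will produce exactly the exponent $\epsilon^{1/2}(N_1/N_2)^{1/4}$ with the log factor to the first power.

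First I would reduce the right-hand side of the claim to the form $\|P_{N_1}\varphi\|_{L^2} \|P_{N_2}v\|_{V^2_S}$ by noting that $u = \alpha_T S(t)\varphi$ and that multiplication by $\alpha_T$ is bounded on $V^2$ uniformly in $T$: indeed, $\alpha_T$ is monotone with supremum $1$ and total variation $1$, so the elementary inequality for $V^p$ norms of products of bounded-variation functions gives $\|\alpha_T f\|_{V^2} \lesssim \|f\|_{V^2}$ with constant independent of $T$. Combined with $\|S(\cdot)\varphi\|_{V^2_S} = \|\varphi\|_{L^2}$ (visible from the two-point partition $\{-\infty,\infty\}$), this yields $\|P_{N_1} u\|_{V^2_S} \lesssim \|P_{N_1}\varphi\|_{L^2}$.

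Next, I would assemble the two bounds. From \eqref{Vp.Bi} and the uniform boundedness \eqref{Q.Vp} of $Q^S_{<M}$ and $Q^S_{\geq M}$ on $V^2_S$, together with the previous step,
\begin{equation*}
\norm[L^2(\R^3)]{(Q_1^S P_{N_1} u)(Q_2^S P_{N_2} v)}
\lesssim \left(\frac{N_1}{N_2}\right)^{1/2}\left(1+\log\frac{N_2}{N_1}\right)^2 \norm[L^2]{P_{N_1}\varphi}\,\norm[V^2_S]{P_{N_2}v}.
\end{equation*}
On the other hand, by H\"older's inequality applied in $(t,x,y)$, the hypothesis \eqref{BS-1-bis-est}, the Strichartz bound \eqref{VP.S1} with $q=r=4$, and \eqref{Q.Vp},
\begin{equation*}
\norm[L^2(\R^3)]{(Q_1^S P_{N_1} u)(Q_2^S P_{N_2} v)}
\leq \norm[L^4]{Q_1^S P_{N_1} u}\,\norm[L^4]{Q_2^S P_{N_2} v}
\lesssim \epsilon\, \norm[L^2]{P_{N_1}\varphi}\,\norm[V^2_S]{P_{N_2}v}.
\end{equation*}

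Finally, I would take the geometric mean of these two inequalities. Writing $\|\cdot\|_{L^2} = \|\cdot\|_{L^2}^{1/2}\cdot\|\cdot\|_{L^2}^{1/2}$ and applying the first bound to one factor and the second to the other produces
\begin{equation*}
\norm[L^2]{(Q_1^S P_{N_1} u)(Q_2^S P_{N_2} v)}
\lesssim \epsilon^{1/2}\left(\frac{N_1}{N_2}\right)^{1/4}\left(1+\log\frac{N_2}{N_1}\right)\norm[L^2]{P_{N_1}\varphi}\,\norm[V^2_S]{P_{N_2}v},
\end{equation*}
which is \eqref{BS-1-bis}. The only non-routine step is the uniform-in-$T$ boundedness of multiplication by $\alpha_T$ on $V^2_S$; once that is in place, the rest is interpolation between two estimates already available from Section~3 and the hypothesis of the lemma.
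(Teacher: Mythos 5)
Your proof is correct and takes essentially the same route as the paper: interpolate between the H\"older/$L^4$ bound (which carries the $\eps$ from hypothesis \eqref{BS-1-bis-est}) and the bilinear Strichartz estimate \eqref{Vp.Bi}, after first reducing $\|P_{N_1}u\|_{V^2_S}$ to $\|P_{N_1}\varphi\|_{L^2}$ via the uniform boundedness of multiplication by $\alpha_T$ on $V^2$. Minor note: the paper writes its second ingredient \eqref{BS-1-bis-est2} with $(N_1/N_2)^1$ rather than the $(N_1/N_2)^{1/2}$ that \eqref{Vp.Bi} actually supplies; your exponent $1/2$ at that step is the one consistent both with \eqref{Vp.Bi} and with the stated conclusion $(N_1/N_2)^{1/4}$ after taking the geometric mean, and you have also usefully made explicit the $V^2$-boundedness of $\alpha_T$ that the paper leaves implicit.
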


\begin{proof}
	The proof is an interpolation between the two estimates 
	\begin{align}
		\label{BS-1-bis-est1}
		\norm[L^2(\R^3)]{(Q_1^S P_{N_1}u) (Q^S_2 P_{N_2}v)}
			& \leq \eps \norm[L^2(\R^2)]{P_{N_1}\varphi}	
						\norm[V^2_S]{P_{N_2} v}
	\intertext{and}
		\label{BS-1-bis-est2}
		\norm[L^2(\R^3)]{(Q_1^S P_{N_1}u) (Q^S_2 P_{N_2}v)}
			&\leq \left( \frac{N_1}{N_2}\right)
					\left( 1+ \log \left( \frac{N_2}{N_1} \right) \right)^2
					\norm[L^2]{P_{N_1} \varphi} 
					\norm[V^2_S]{P_{N_2} v}.
	\end{align}
	The first follows from H\"older's inequality, the linear Strichartz estimate, \eqref{BS-1-bis-est}, and \eqref{L4.Vp}. The second follows from the bilinear Strichartz estimate \eqref{Vp.Bi}.

\end{proof}

\begin{proof}[Proof of Proposition \ref{prop:Iinfty}]
	First, if $u,v \in \dot{Y}^{-\frac12}$, then $P_N I_\infty(u,v) \in V^2_S$ by Proposition \ref{prop:V0}, Proposition \ref{prop:IT.est}, and the definition of $V^2_S$ spaces. Using Proposition \ref{prop:V0} again we recover
	$$ \sum_N N^{-1} \norm[V^2_S]{P_N I_T(u,v)}^2 < \infty$$
	which shows that $I_\infty(u,v) \in \dot{Y}^{-\frac12}$. In the last step of the proof we will show that, also, $I_\infty(u,v) \in \dot{Z}^{-\frac12}$. 
	
	Next, we show the existence of $\varphi_+$. If $u \in \dot{Y}^{-\frac12}$, we have $I_\infty(u,u) \in \dot{Y}^{-\frac12}$. It follow from Lemma \ref{lemma.Y.scat} that there is a $\varphi^+ \in \dot{H}^{-\frac12,0}$ with the required properties.
	
	Finally, we show that $I_T \to I_\infty$ and show that $I_\infty(u,v) \in \dot{Z}^{-\frac12}$. Before giving the proof we make some simplifying remarks and outline the main ideas. First, by continuity and density, it is enough to prove the convergence for those $u,v$ for which only finitely many of the $P_N u$ and $P_N v$ are nonzero. Second, it is enough to show that the sequence $\{ I_T(u,v) \}_T$ is Cauchy in $\dot{Z}^{-\frac12}$ since $\dot{Z}^{-\frac12}$ is complete. Third, by Lemma \ref{lemma.Y.scat}, there are $u_+$ and $v_+$ in $\dot{H}^{-\frac12,0}$  so that $S(-t) u(t) \to u_+$ and $S(-t)v(t) \to v_{+}$ in $\dot{H}^{-\frac12,0}$ as $t \to \infty$, and we can reduce the proof of the Cauchy estimate to a study of $I_T$ on solutions of the linear problem. The proof of convergence will also show that $I_\infty(u,v) \in \dot{Z}^{-\frac12}$.
	
	Indeed, suppose that $u = \sum_{L^{-1} \leq N \leq L} u_N$ and $v= \sum_{L^{-1} \leq N \leq L} v_N$ for a dyadic integer $L$. By Lemma \ref{lemma.Y.scat}, there exist $u_+,v_+ \in \dot{H}^{-\frac12,0}$ so that 
	$$\lim_{t \to +\infty} \norm[\dot{H}^{-\frac12,0}]{u(t) - S(t) u_+}=0$$ 
	and
	$$\lim_{t \to +\infty} \norm[\dot{H}^{-\frac12,0}]{v(t) -  S(t)v_+}=0$$
	We set
	$$ \psi(t) =  S(t) u_+, \quad \phi(t) =  S(t) v_+ .$$
		
	For $T>0$, let
	$$ \widetilde{u}(t) = u(t) - \alpha_0(t) \phi(t), \quad \widetilde{v}(t) = v(t) - \alpha_0(t) \psi(t). $$
	Note that $\lim_{t \to \infty} \norm[\dot{H}^{-\frac12,0}]{\widetilde{u}(t)} = \lim_{t \to \infty} \norm[\dot{H}^{-\frac12,0}]{\widetilde{v}(t)} = 0$. It follows from Proposition 2.4(i) in \cite{HHK09} with $0$ replaced by $\infty$ that, for given $\eps>0$, there is a $T>0$ with 
		\begin{equation}
		\label{uv.tilde.vanish}
			\norm[\dot{Y}^{-\frac12}]{\alpha_T \widetilde{u}}, \norm[\dot{Y}^{-\frac12}]{\alpha_T \widetilde{v}} < \eps.
		\end{equation}
	(recall $\alpha_T$ was defined in \eqref{alpha.T}). A straightforward computation using the definition of $I_T(u,v)$ shows that, for $0 < T_1 < T_2 < T$,
	\begin{equation}
		\label{IT1.IT2}
		I_{T_1}(u,v) - I_{T_2}(u,v) = I_{T_1}(\alpha_{T_1}u,\alpha_{T_1}v) - I_{T_2}(\alpha_{T_1}u,\alpha_{T_1}v).
	\end{equation}
	We will show that $\norm[\dot{Z}^{-\frac12}]{I_{T_1}(u,v) - I_{T_2}(u,v)} \to 0$ as $T_1,T_2 \to \infty$ by showing that each one of the two terms is small for $T_1,T_2$ sufficiently large. We give the estimate for $I_{T_1}$ since the estimate for $I_{T_2}$ is similar. We write
	\begin{align*}
		I_{T_1}(\alpha_{T_1}u,\alpha_{T_1}v) 
			&=	I_{T_1}(\alpha_{T_1} \widetilde{u},v) + I_{T_1}(\alpha_{T_1} \psi(t),\widetilde{v}) + I_{T_1}(\alpha_{T_1}(\psi(t),\alpha_{T_1}\phi(t))
	\end{align*}
	The first two terms vanish in $\dot{Z}^{-\frac12}$ norm as $T_1 \to \infty$ by \eqref{IT.bdYY} and \eqref{uv.tilde.vanish}. Thus, it remains to show that 
	$I_{T_1}(\alpha_{T_1}\psi(t),\alpha_{T_1}\phi(t))$ vanishes as $T_1 \to \infty$. To do this we will need estimates on the bilinear form that take into account properties of the linear evolution. 
	
	Since $\psi(t)$ and $\phi(t)$ are finite linear combinations of solutions to the linear problem \eqref{KPII-5-linear}, we may use Lemma \ref{lemma:free.shrink} to conclude that there is a $T'$ so that 
	\begin{align}
		\label{free.shrink.psi}
		\norm[L^4]{Q^S_*\alpha_{T_1}\psi(t)} &\lesssim \eps \norm[\dot{H}^{-\frac12,0}]{\psi(0)}
		\intertext{and}
		\label{free.shrink.phi}
		\norm[L^4]{Q^S_* \alpha_{T_1}\phi(t)} &\lesssim \eps \norm[\dot{H}^{-\frac12,0}]{\phi(0)}
	\end{align}
	for $T_1 > T'$.
	
	We now claim that 
	$$ \norm[\dot{Z}^{-\frac12}]{I_T(\alpha_{T_1}\psi(t),\alpha_{T_1}\phi(t))} \lesssim \eps^\frac12 \norm[\dot{H}^{-\frac12,0}]{\psi(0)} \norm[\dot{H}^{-\frac12,0}]{\phi(0)} $$
	for $T_1 > T'$. To see this, we rework the estimates on
	\begin{align*}
		J_1	&=	\sum_{N_2} \sum_{N_1 \ll N_2} I_T(\alpha_{T_1}\psi(t),\alpha_{T_1}\phi(t))
	\intertext{and}
		J_2 &= 	\sum_{N_2} \sum_{N_1 \sim N_2} I_T(\alpha_{T_1}\psi(t),\alpha_{T_1}\phi(t)
	\end{align*}
	(see the proofs of Propositions \ref{prop:IT.est} and \ref{prop:IT.L2}) using the $L^4$ estimates \eqref{free.shrink.psi} and \eqref{free.shrink.phi} at key points to obtain the needed factors of $\eps$.  We will outline the estimates on $J_1$ and $J_2$ but refer the reader to the proof of Theorem 5.3.8 in \cite{CS2023} for details.
	
	To bound $J_1$ we repeat the proof of \eqref{IT.J1.Final} replacing \eqref{pre-bilinear-1} with the improved estimate
	\begin{multline*}
		\Biggl| \sum_{N_1 \lesssim N_2} \int_0^T \int_{\R^2} \alpha_{T_1}(P_{N_1}\psi) \, \alpha_{T_1} (P_{N_2} \phi) \,  \diff_x w_{N_3} \, dx \, dy \, dt
		\Biggr|\\
		\lesssim \eps^\frac12 \norm[\dot{H}^{-\frac12,0}]{\psi(0)}
			N_2^{-\frac12}\norm[L^2]{P_{N_2}\phi(0)} N_3^{\frac12}\norm[V^2_S]{w_{N_3}}
	\end{multline*}
	using \eqref{free.shrink.psi},  \eqref{free.shrink.phi}, and \eqref{BS-1-bis}. This gives
	\begin{equation}
		\label{IT.J1.asy.shrink}
		\norm[\dot{Z}^{-\frac12}]{J_1} \lesssim \eps^{\frac12} \norm[\dot{H}^{-\frac12,0}]{\psi(0)} \norm[\dot{H}^{-\frac12,0}]{\phi(0)}.
	\end{equation}
	
	To bound $J_2$ we repeat the proof of \eqref{IT.J2.Final} replacing \eqref{pre-bilinear-2} with the improved estimate
	\begin{multline*}
		\Biggl(
			\sum_{N_3 \lesssim N_2}
				N \sup_{\norm[V^2_S]{w}=1}	
			\left|
				\int_0^T \int 
					\alpha_{T_1} (P_{N_1}\psi) \,
					\alpha_{T_2} (P_{N_2}\phi \,
					 w_{N_3}
				\, dx \, dy \, dty
			\right|	
		\Biggr)^\frac12\\
		\lesssim 
			\eps^{\frac12} 
				N_1^{-\frac12}  \norm[L^2]{P_{N_1}\psi(0)}
				N_2^{-\frac12}  \norm[L^2]{P_{N_2}\phi(0)}
	\end{multline*}
	which again uses \eqref{free.shrink.psi},  \eqref{free.shrink.phi}, and \eqref{BS-1-bis}. 
	
	\end{proof}

\section{Proof of the Main Theorem}

In this section we give the proof of Theorem \ref{thm:main}. For a discussion of calculus on maps between Banach spaces, see, for example,  Appendices A and B of P\"{o}schel-Trubowitz \cite{PT1987}.

\begin{proof}[Proof of Theorem \ref{thm:main}]
	We will construct the solution for $t \in [0,\infty)$ and prove scattering as $t \to +\infty$. The analogous proof for $t \in (-\infty,0]$ and scattering as $t \to -\infty$ is similar and is omitted.

	Denote by $\dot{Z}^{-\frac12}[0,\infty)$ the set of functions $u$ with $u = \left. v \right|_{[0,\infty)}$ for some $v \in \dot{Z}^{-\frac12}$, and define
	$$ \norm[\dot{Z}^{-\frac12}[0,\infty)]{u} = 
			\inf 
				\left\{ 
					\norm[\dot{Z}^{-\frac12}]{v} : \left. v \right|_{[0,\infty)} = u
				\right\} .
	$$ 
	We can make an analogous definition of $\dot{Y}^{-\frac12}[0,\infty)$. We have the  continuous embedding $\dot{Z}^{-\frac12}[0,\infty) \hookrightarrow \dot{Y}^{-\frac12}[0,\infty)$.
	
	For $r>0$, let 
	\begin{equation}
		\label{Dr}
			D_r = \{ u \in \dot{Z}^{-\frac12}[0,\infty): \norm[\dot{Z}^{-\frac12}[0,\infty)]{u} < r \}. 
	\end{equation}
	and, for $\delta > 0$, define $\dot{B}_\delta$ as in \eqref{dotB}. Observe that, if $u_0 \in \dot{B}_\delta$, the function $\alpha_0(t) S(t) u_0$ lies in $\dot{Z}^{-\frac12}$ so that the restriction of $S(t) u_0$ to $[0,\infty)$ lies in $\dot{Z}^{-\frac12}[0,\infty)$.  With this observation in mind, we can prove existence and uniqueness of solutions in $\dot{Z}^{-\frac12}[0,\infty)$ by a contraction mapping argument as follows.
	
	For $\delta>0$ and $r$ to be chosen let
	\begin{align}
		\label{contraction}
		\Phi:	(u,u_0) &\to S(t)u_0 - I_T(u,u) \\
			D_r \times \dot{B}_\delta &\mapsto \dot{Z}^{-\frac12}[0,\infty)
			\nonumber 
	\end{align}
	where $\dot{B}_\delta$ and $D_r$ are defined as in \eqref{dotB} and \eqref{Dr}. We need to show that, for each fixed $u_0 \in \dot{B}_\delta$, 
	\begin{align}
		\label{contract1}
			\Phi: D_r &\to D_r \\
		\intertext{and}
			\norm[\dot{Z}^{-\frac12}]{\Phi(u,u_0) - \Phi(v,u_0)} &\leq c \norm[\dot{Z}^{-\frac12}]{u-v}, \quad 0<c<1.
	\end{align} 
	for all $u,v \in D_r$. This argument proceeds exactly as in \cite{HHK09} with the choices $\delta=(4C+4)^{-2}$ and $r=(4C+4)^{-1}$, where $C$ is the constant in \eqref{IT.bdZZ}. We conclude that $\Phi$ has a unique fixed point, producing a unique solution to \eqref{KPII-5-int}.
	
	To show continuous dependence on $u_0$, we use the integral equation \eqref{KPII-5-int} together with the estimate \eqref{IT.bdZZ} to conclude that, if $u_1$ and $u_2$ are solutions with respective initial data $u_1(0), u_2(0) \in \dot{B}_\delta$, then
	\begin{align}
		\label{u.L2.est2}
		\norm[\dot{Z}^{-\frac12}]{u_1-u_2} 
		&\leq \norm[\dot{H}^{-\frac12,0}]{u_1(0)-u_2(0)} \\
		&\qquad + C \norm[\dot{Z}^{-\frac12}]{u_1-u_2} \left( \norm[\dot{Z}^{-\frac12}]{u_1} + \norm[\dot{Z}^{-\frac12}]{u_2} \right).
		\nonumber
	\end{align}
	With the above choices of $r$ and $\delta$ we may conclude that
	\begin{equation}
		\label{continuity.1}
			\norm[\dot{Z}^{-\frac12}[0,\infty)]{u_1 - u_2} \lesssim \norm[\dot{H}^{-\frac12,0}]{u_1(0)-u_2(0)}
	\end{equation}

	As $\dot{Z}^{-\frac12}[0,\infty)$ in continuously embedded in $\dot{Y}^{-\frac12}[0,\infty)$, 
	any solution $u$ of \eqref{KPII-5-int} satisfies 
	$$ S(-t) u = u_0 - S(-t) I_{\infty}(u,u) $$
	where $I_\infty(u,u) \in \dot{Y}^{-\frac12}[0,\infty)$. It follows from Lemma \ref{lemma.Y.scat} that the second term has an asymptotic limit as $t \to \infty$ in $\dot{H}^{-\frac12,0}$ so we define
	\begin{equation}
		\label{u+}
		u_+ = u_0 + \lim_{t \to \infty} S(-t)I_\infty(u,u)
	\end{equation}
	where the limit is taken in $\dot{H}^{-\frac12,0}(\R^2)$. One can give an analogous definition for $u_-$, the asymptote as $t \to -\infty$. 
	The map $u_0 \mapsto u_+$ is the composition of $u_0 \mapsto u$ and the map \eqref{u+}. The first is analytic by the analytic implicit function theorem since $u$ is defined implicitly by
	$\Phi(u_0,u)=u$ and $\Phi$ is an analytic map from $D_r$ to itself for each fixed $u_0 \in \dot{B}_\delta$. The second is easily seen to be differentiable in the complex sense, hence analytic. Hence $W_+:u_0 \to u_+$ is analytic. The map $V_+$ is 
	analytic by the analytic inverse function theorem since it is easy to compute that $V_+(0)=0$ and $DV_+(0)=I$, the identity map. 
	
	A similar argument proves the existence and uniqueness of solutions in $\dot{Z}^{-\frac12}(-\infty,0]$ for $u_0 \in \dot{B}_\delta$, existence and analyticity of the map $W_-: u_0 \to    u_-$, and analyticity of the map $V_-: u_- \to u_0$.   We have thus established existence and uniqueness of solutions $u \in \dot{Z}^{-\frac12}$ for $u_0 \in \dot{B}_\delta$ together with scattering properties. Using the estimate \eqref{continuity.1} and a similar estimate for the solutions on $(-\infty,0]$, we conclude that
	\begin{equation}
		\label{continuity.2}
			\norm[C(\R,\dot{H}^{-\frac12,0})]{u_1 - u_2}
				\lesssim \norm[\dot{Z}^{-\frac12}]{u_1-u_2} 
				\lesssim \norm[\dot{H}^{-\frac12,0}]{u_1(0)-u_2(0)}
	\end{equation}
	which gives continuity of the solution in terms of the initial data.
	
	It remains to show that, if $u_0 \in \dot{B}_\delta \cap L^2(\R^2)$, then 
	$\norm[L^2]{u(t)} = \norm[L^2]{u(0)}$ and $\norm[L^2]{u_+} = \norm[L^2]{u_0}$. First, we show that $u \in C(\R,L^2(\R))$. From the integral equation we may estimate
	$$
		\norm[\dot{Z}^0]{u} \leq \norm[L^2]{u_0} + \norm[\dot{Z}^0]{I_\infty(u,u)}
	$$
	where the second right-hand term is finite owing to Proposition \ref{prop:IT.L2}. Since $u \in \dot{Z}^0 \subset \dot{Y}^0$, it follows that $u(t) \in L^2(\R^2)$ for all $t$. To recover continuity of $u:\R \to L^2(\R^2)$ we exploit the fact that $u \in C(\R,\dot{H}^{-\frac12,0})$ and the uniform bound on $\sum_N \norm[L^2(\R^2)]{P_N u(t)}^2$. Thus $u \in C(\R,L^2(\R^2))$. 
	
	We next show that $\norm[L^2]{u(t)}$ is conserved. For given $u(0) \in \dot{H}^{-\frac12,0}(\R^2) \cap L^2(\R^2)$, we choose a sequence $\{ u_n(0) \}$ from $C_0^\infty(\R^2)$ with $u_n(0) \to u(0)$ in $\dot{H}^{-\frac12,0} \cap L^2$. We claim that, first, the solutions $u_n(t)$ with initial data $u_n(0)$ have conserved $L^2$ norm and second, $u_n(t) \to u(t)$ in $L^2 \cap \dot{H}^{-\frac12,0}$ for each $t$. If correct, the $L^2$ conservation law for $u(t)$ follows.
	
	For $u_n(0) \in C_0^\infty$ we obtain a classical solution for which the conservation of $L^2$ norm follows from multiplying \eqref{KPII-5} by $u$ and integrating over space. It remains to show the claimed continuity.
	
	Let $u_1(t)$ and $u_2(t)$ be solutions of \eqref{KPII-5} with initial data $u_1(0)$ and $u_2(0)$ belonging to $\dot{H}^{-\frac12,0} \cap L^2$. From the integral equation \eqref{KPII-5-int}, the estimates \eqref{IT.bdZZ} and \eqref{IT.L2}, and bilinearity of $I_T$  we conclude that
	\begin{align}
	\label{u.L2.est1}
		\norm[\dot{Z}^0]{u_1-u_2} 
		&\leq \norm[L^2]{u_1(0)-u_2(0)} \\
		&\qquad + C \norm[\dot{Z}^{-\frac12}]{u_1-u_2} \left( \norm[\dot{Z}^{-\frac12}]{u_1} + \norm[\dot{Z}^{-\frac12}]{u_2} \right)
		\nonumber
	\end{align}
	With the choices $r=(4C+4)^{-1}$ we deduce from \eqref{u.L2.est1} and \eqref{u.L2.est2} that, for $u_1,u_2 \in \dot{B}_\delta \cap L^2$, the estimate
		$$ \norm[\dot{Z}^0]{u_1 - u_2} \lesssim \norm[L^2]{u_1(0)-u_2(0)} + \norm[\dot{H}^{-\frac12,0}]{u_1(0) - u_2(0)}$$
	holds, which implies the required $L^2$ continuity of solutions. 
\end{proof}

\section*{Acknowledgements}
The authors thank the referees for a number of useful comments and suggestions which have improved our paper.
This work was supported by a grant from the Simons Foundation.

\bigskip

\bibliographystyle{plain}
\bibliography{KPII-5.bib}

\end{document}